\newtheorem{theorem}{Theorem}
\newtheorem{corollary}[theorem]{Corollary}
\newtheorem{lemma}[theorem]{Lemma}
\newtheorem{observation}[theorem]{Observation}
\newtheorem{question}[theorem]{Question}
\newcommand{\GG}{\mathcal{G}}
\begin{document}
\title{Weak diameter coloring of graphs on surfaces}
\author{Zden\v{e}k Dvo\v{r}\'ak
	\thanks{Charles University, Prague, Czech Republic. E-mail: \protect\href{mailto:rakdver@iuuk.mff.cuni.cz}{\protect\nolinkurl{rakdver@iuuk.mff.cuni.cz}}.
	Supported by the ERC-CZ project LL2005 (Algorithms and complexity within and beyond bounded expansion) of the Ministry of Education of Czech Republic.}
		\and Sergey Norin \thanks{McGill University, Montr\'{e}al, Quebec, Canada.  E-mail: \protect\href{mailto: 
			sergey.norin@mcgill.ca}{\protect\nolinkurl{sergey.norin@mcgill.ca}}. Supported by an NSERC Discovery grant.}
		}

\date{}

\maketitle

\begin{abstract}
Consider a graph $G$ drawn on a fixed surface, and assign to each vertex a list of colors of size
at least two if $G$ is triangle-free and at least three otherwise.
We prove that we can give each vertex a color from its list so that
each monochromatic connected subgraph has bounded weak diameter (i.e., diameter
measured in the metric of the whole graph $G$, not just the subgraph).  In case
that $G$ has bounded maximum degree, this implies that each connected monochromatic
subgraph has bounded size.  This solves a problem of Esperet and Joret for planar triangle-free
graphs, and extends known results in the general case to the list setting, answering a question of Wood.
\end{abstract}

The colorings in this paper are not necessarily proper.
The \emph{weak diameter} of a subgraph $H$ of a graph $G$ is the maximum distance
in $G$ between vertices of $V(H)$; note that the distances are measured in the whole graph $G$, not
in the subgraph $H$.  For a non-negative integer $\ell$, a \emph{weak diameter-$\ell$ coloring} of a graph $G$
is an assignment of colors to its vertices such that each monochromatic connected subgraph
has weak diameter at most $\ell$ (in particular, a weak diameter-$0$ coloring is just a proper coloring).
We say a class of graphs $\GG$ has \emph{weak diameter chromatic number} at most $k$
if for some~$\ell$, every graph in $\GG$ has a weak diameter-$\ell$ coloring using at most $k$ colors.

Weak diameter coloring arises in the context of \emph{asymptotic dimension} of graph classes:
Denoting by $G^r$ the graph obtained from $G$ by joining by an edge each pair of distinct vertices at distance at most $r$,
the class $\GG$ has \emph{asymptotic dimension} at most $d$ if for every $r\ge 1$, the class $\{G^r:G\in\GG\}$
has weak diameter chromatic number at most $d+1$.  In a recent breakthrough, Bonamy et al.~\cite{bonamy2021asymptotic}
proved that graphs of bounded treewidth have asymptotic dimension 1 and all proper minor-closed classes have
asymptotic dimension at most~2. As a special case:
\begin{theorem}[Bonamy et al.~\cite{bonamy2021asymptotic}]\label{thm-wdcr}
For any surface $\Sigma$, the class of graphs drawn on $\Sigma$ has weak diameter chromatic number at most $3$.
\end{theorem}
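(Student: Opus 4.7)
The plan is to derive the theorem from the stronger statement that the class of graphs drawn on $\Sigma$ has asymptotic dimension at most $2$; evaluated at scale $r=1$, this yields weak diameter chromatic number at most $3$ by the definition preceding the theorem statement. The approach combines three ingredients. First, by the product structure theorem of Dujmovi\'c, Joret, Micek, Morin, Ueckerdt and Wood (in its version for surfaces), every graph $G$ drawn on $\Sigma$ is a subgraph of $H \boxtimes P \boxtimes K_c$, where $H$ has treewidth depending only on $\Sigma$, $P$ is a path, and $c$ depends only on $\Sigma$. Second, $\brm{asdim}$ is monotone under subgraphs and subadditive under the strong product: $\brm{asdim}(X \boxtimes Y) \le \brm{asdim}(X) + \brm{asdim}(Y)$. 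The product inequality is proved by combining weak-diameter colorings of the two factors via diagonalization: assign to $(x,y)$ the color $c_X(x) + c_Y(y) \pmod{k_X + k_Y - 1}$, and bound the weak diameter of a monochromatic component in the product by projecting it to each factor.

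Third, one needs the base cases. A path has $\brm{asdim} = 1$ (color by $\lfloor i/\ell \rfloor \bmod 2$ at scale~$\ell$); a finite $K_c$ has $\brm{asdim} = 0$ because it has bounded diameter; and, most substantially, any graph of bounded treewidth has $\brm{asdim} \le 1$. Assembling these, $\brm{asdim}(G) \le 1 + 1 + 0 = 2$, so each $G$ on $\Sigma$ admits a weak diameter-$\ell$ coloring with at most $3$ colors for some $\ell = \ell(\Sigma)$, proving the theorem.

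The main obstacle is the bounded-treewidth base case. Proving $\brm{asdim}(H) \le 1$ for $H$ of treewidth~$t$ is considerably more delicate than the corresponding proper-coloring statement. The plan would be to fix a rooted tree decomposition of $H$, partition the decomposition tree into BFS-style layers from the root at a scale matched to $\ell$, color vertices by the parity of the deepest layer containing them, and then handle the within-layer structure by induction on the treewidth, using that bags form small separators so that monochromatic components cannot connect far-apart regions without traversing a bounded number of small cuts. A secondary difficulty lies in the surface step: one must check that the $K_c$ factor in the product structure genuinely contributes $\brm{asdim} = 0$ at all scales (this is automatic, but only because $K_c$ has bounded diameter), and that the product structure theorem is available in the strong form needed here for arbitrary $\Sigma$, rather than only for the sphere.
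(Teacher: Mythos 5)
The paper does not prove this theorem; it quotes it as a direct corollary of the main results of Bonamy, Bousquet, Esperet, Groenland, Liu, Pirot, and Scott, who show that bounded-treewidth classes have asymptotic dimension at most $1$ and proper minor-closed classes have asymptotic dimension at most $2$. Their argument proceeds by a direct inductive construction on tree decompositions, combined with a BFS-layering step for the minor-closed case; to my knowledge it does not pass through the graph product structure theorem.

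Your route---product structure plus subadditivity of $\brm{asdim}$ under strong products---is a legitimate alternative that has appeared in later literature, but there are two substantive issues. First, the claimed proof of the product inequality by a modular sum is incorrect as written. Take $X=Y=P$ (an infinite path) with the $\ell$-block two-coloring $c(x)=\lfloor x/\ell\rfloor\bmod 2$, and consider $(x,y)\mapsto c(x)+c(y)\bmod 3$ on $P\boxtimes P$. The middle color class consists of all $\ell\times\ell$ blocks of type (even, odd) or (odd, even); a block of the first type and a block of the second type meet at a corner in the anti-diagonal direction, and in the $\ell^\infty$ (strong-product) metric those corners are at distance $1$. Hence these blocks chain into an unbounded monochromatic component at every scale $r\ge 1$, and the coloring fails. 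The product inequality $\brm{asdim}(X\boxtimes Y)\le\brm{asdim}(X)+\brm{asdim}(Y)$ is true, but its proof requires a brick/offset-style coordination of the two factor colorings (or the Hurewicz--Bell--Dranishnikov fibering theorem), not a plain modular sum. Second, your plan still relies on $\brm{asdim}\le 1$ for bounded treewidth, which is itself the hard half of the cited theorem; so the proposal reorganizes rather than replaces the dependence on Bonamy et al.
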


In graphs with bounded maximum degree, the notion of weak diameter coloring coincides with the
well-studied notion of \emph{clustered coloring}.  For a positive integer $s$, a \emph{coloring with clustering $s$}
is an assignment of colors to vertices such that each monochromatic component has size at most $s$ (a~coloring with clustering $1$
is just a proper coloring).  A class of graphs $\GG$ has \emph{clustered chromatic number} at most $k$
if for some $s$, every graph in $\GG$ has a coloring with clustering at most $s$ using at most $k$ colors. We refer the reader to  an extensive survey by Wood~\cite{wood2018defective} for further background on clustered coloring.

\begin{observation}\label{obs-coinc}
Let $k\ge 1$ be an integer and let $\GG$ be a class of graphs.
\begin{itemize}
\item If $\GG$ has clustered chromatic number at most $k$, then $\GG$ also has weak diameter chromatic number at most $k$.
\item If $\GG$ has weak diameter chromatic number at most $k$ and bounded maximum degree, then
$\GG$ also has clustered chromatic number at most $k$.
\end{itemize}
\end{observation}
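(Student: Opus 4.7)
The observation has two implications to verify, both of which look routine rather than difficult; the main task is just to identify the right bound for each direction.

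For the first implication, suppose $\GG$ has clustered chromatic number at most $k$, witnessed by some bound $s$. Given $G\in\GG$, take a $k$-coloring with clustering $s$ and let $H$ be any monochromatic connected subgraph. Since $H$ has at most $s$ vertices and is connected, any two vertices of $H$ are joined by a path in $H$ of length at most $s-1$. Because that path lies in $G$ as well, the distance in $G$ between any two vertices of $H$ is at most $s-1$. Hence the same coloring is a weak diameter-$(s-1)$ coloring, and the weak diameter chromatic number is at most $k$.

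For the second implication, suppose $\GG$ has weak diameter chromatic number at most $k$ with threshold $\ell$, and that every $G\in\GG$ has maximum degree at most some fixed $\Delta$. Given $G\in\GG$, take a weak diameter-$\ell$ coloring and let $H$ be any monochromatic connected component. Pick any vertex $v\in V(H)$; by the weak diameter condition, every vertex of $H$ is at distance at most $\ell$ from $v$ in $G$. Since $G$ has maximum degree at most $\Delta$, the ball of radius $\ell$ around $v$ in $G$ contains at most $s\colonequals 1+\Delta+\Delta^2+\cdots+\Delta^\ell$ vertices. Thus $|V(H)|\le s$, and the same coloring witnesses clustered chromatic number at most $k$.

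The only subtlety is to notice that in the first direction we measure distance along the subgraph path (which is automatically a path in $G$), while in the second direction we convert a diameter bound to a size bound by the standard volume estimate in bounded-degree graphs. Neither step is an obstacle; the observation is essentially a bookkeeping remark that makes precise how the two notions coincide on bounded-degree classes.
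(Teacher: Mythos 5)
Your proof is correct, and since the paper states this as an observation without supplying a proof, your argument is exactly the routine one the authors had in mind: clustering $s$ gives weak diameter at most $s-1$ via the in-component path, and weak diameter $\ell$ with maximum degree $\Delta$ gives clustering at most $1+\Delta+\cdots+\Delta^\ell$ via the ball-volume bound.
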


Hence, by Theorem~\ref{thm-wdcr}, the class of graphs of bounded maximum degree drawn on any fixed surface~$\Sigma$ has clustered chromatic number at most three.
This was proved earlier by Esperet and Joret~\cite{espjor}, who also asked the following question.
\begin{question}\label{quest-trfree}
Consider a surface $\Sigma$ and a positive integer $\Delta$.  Is it true that the class of triangle-free
graphs of maximum degree $\Delta$ than can be drawn on~$\Sigma$ has clustered chromatic number at most two?
\end{question}

It is also natural to consider the list versions of these notions.  Given an~assignment $L$ of lists of colors to vertices
of a graph $G$, a coloring of $G$ is an \emph{$L$-coloring} if the color of each vertex $v\in V(G)$ belongs to $L(v)$.
We say that a class of graphs $\GG$ has \emph{weak diameter choosability} at most $k$ if for some~$\ell$,
every graph in $\GG$ has a weak diameter-$\ell$ $L$-coloring from any assignment $L$ of lists of size at least $k$;
and \emph{clustered choosability} at most $k$ if for some $s$, every graph in $\GG$ has an $L$-coloring with clustering at most $s$
from any assignment $L$ of lists of size at least $k$.

For any surface $\Sigma$, the class of graphs drawn on $\Sigma$ has clustered choosability at most four~\cite{islands}.
This bound cannot be improved (even for non-list coloring), as for every $s$, there exists a planar graph
that has no coloring with clustering at most $s$ using at most three colors.  However,
it has been asked by Wood whether this can be improved for graphs of bounded maximum degree (matching the
result of Esperet and Joret~\cite{espjor} in the non-list setting).
\begin{question}[{Wood~\cite[Open Problem 18]{wood2018defective}}]\label{quest-list}
Consider a surface $\Sigma$ and a positive integer $\Delta$.  Is it true that the class of 
graphs of maximum degree $\Delta$ than can be drawn on $\Sigma$ has clustered choosability at most three?
\end{question}

We answer Questions~\ref{quest-trfree} and~\ref{quest-list} in positive, in the more general setting
of weak diameter choosability.

\begin{theorem}\label{thm-main}
For every surface $\Sigma$,
\begin{description}
\item[(i)] the class of graphs drawn on $\Sigma$ has weak diameter choosability at most three, and
\item[(ii)] the class of triangle-free graphs drawn on $\Sigma$ has weak diameter choosability at most two.
\end{description}
\end{theorem}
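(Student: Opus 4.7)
\emph{Proof plan.} I would prove both parts of Theorem~\ref{thm-main} by induction on the Euler genus of $\Sigma$, with the planar case as the heart of the argument. The induction step would use the standard surface-topological reduction: either $G$ contains a short non-contractible cycle $C$, in which case cutting $\Sigma$ along $C$ lowers the genus while leaving only a small ``seam'' of vertices that can be absorbed into a precoloured boundary, or no such cycle exists, and then every ball of bounded radius in $G$ is embedded in a disk, so the planar argument applies locally. To make this work cleanly I would prove a stronger inductive statement that allows an arbitrary precoloured boundary set of bounded size, carrying along the promise that monochromatic components using a precoloured vertex have weak diameter bounded in terms of that size.

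For the planar case I would exploit a BFS layering $L_{0},L_{1},\ldots$ of $G$ from an arbitrary root $r$. The key quantitative input is that any two vertices in layers $L_{i}$ and $L_{j}$ are at $G$-distance at most $i+j$; so to bound the weak diameter of monochromatic components it suffices to confine each such component to a bounded range of consecutive BFS layers, together with a bounded lateral extent within those layers. The latter is controlled by the bounded layered treewidth of planar graphs, and the former by a careful choice of colours between adjacent layers.

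For part~(i), I would color layer-by-layer, requiring each vertex $v\in L_{i+1}$ to avoid the colour of its BFS-parent in $L_{i}$; lists of size $\ge 3$ leave two remaining admissible colours, and this local freedom can be used to break monochromatic paths inside each layer by a short-range rule depending on previously coloured neighbours. A routine induction on~$i$, using the strengthened inductive statement from above, should then show that every monochromatic component intersects only $O(1)$ consecutive layers and hence has bounded weak diameter.

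Part~(ii) is where the main difficulty lies: with lists of size~$2$, once the parent's colour is forbidden no freedom remains, and the layer-by-layer scheme collapses. Here I would invoke Euler's formula $|E(G)|\le 2|V(G)|-4$ for triangle-free planar graphs, via a discharging argument, to locate a reducible configuration in every sufficiently large triangle-free plane graph --- for instance, a vertex of degree $\le 2$, or a short path of low-degree vertices whose lists force a pattern that can be handled directly. Removing such a configuration, applying induction on $|V(G)|$, and extending the colouring back should preserve the weak-diameter bound, provided the reducible configurations are chosen so that the new monochromatic paths they create are short in the $G$-metric. I expect the combinatorial heart of the proof to lie in showing that such configurations persist even when a bounded precoloured boundary is fixed, since this is what the surface reduction and the inductive hypothesis both require.
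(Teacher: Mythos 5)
Your outer skeleton --- induction on Euler genus, cutting along a short non-contractible cycle, a strengthened inductive statement with a bounded pre-colored boundary --- coincides with the paper's high-level structure. But the core planar/no-short-non-contractible-cycle argument is where the paper does all the real work, and your plan there has two genuine gaps.

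\textbf{The missing mechanism: controlling component merging upon re-insertion.} Both of your sub-plans (BFS layering for (i), discharging-and-reducibility for (ii)) end with ``remove a configuration, color the rest by induction, put the configuration back.'' The paper's entire technical apparatus is built around the fact that this step is where the approach naively fails: when you re-insert a deleted set $S$ and color it, a single vertex of $S$ may have, say, two neighbors of color $1$ lying in two distinct monochromatic components of $G-S$; choosing color $1$ for that vertex fuses them, and iterating this through the induction gives no bound at all on weak diameter. The paper's answer is the notion of a $c$-sparsifier $(S,\gamma)$ and $B$-opaque colorings (Lemma~\ref{lemma-delspars}): the deleted sets are not arbitrary low-degree configurations but ones that admit, for \emph{any} multiset of outside colors bounded by a degree budget, a list-coloring in which no two outside monochromatic components get joined through $S$. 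Your phrase ``provided the reducible configurations are chosen so that the new monochromatic paths they create are short in the $G$-metric'' gestures at this, but the constraint is not on path length inside the configuration --- it is on \emph{not linking} distinct outside components --- and without isolating this property you cannot close the induction. In particular, a degree-$2$ vertex in the triangle-free case, your prototypical reducible configuration, does not have this property: both its neighbors could be forced to the same color, and coloring it either way from a size-$2$ list merges their components.

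\textbf{Separating short cycles.} The paper's density/island machinery (Lemma~\ref{lemma-islands}, Lemma~\ref{lemma-spaspar}, Corollary~\ref{cor-nocut}) only applies when $G$ has no separating $t$-cycles; the case of separating triangles (resp.\ $4$-cycles) that cannot be peeled off by the pre-coloring induction is handled separately via the $t$-stack lemma (Lemma~\ref{lemma-stacks}), which is a nontrivial recursive construction in its own right. Your stronger inductive statement with a pre-colored boundary ``of bounded size'' would have to be formulated carefully to interact correctly with these cycles: a vertex inside a separating triangle could be adjacent to all three pre-colored triangle vertices, exhausting a size-$3$ list, so ``properly extend the boundary coloring'' is not always available. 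The paper avoids this by restricting the pre-colored boundary to a $c$-solitary $t$-cycle (Lemma~\ref{lemma-ext}) and treating the bad case ($t$-stacks) with a bespoke argument. Your plan does not surface this obstruction.

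\textbf{On the BFS-layering route for (i).} This is genuinely different from the paper's density argument and is closer in spirit to the original Bonamy et al.\ proof. It could plausibly work, but as written it skips the hard step: ``break monochromatic paths inside each layer by a short-range rule'' is exactly the kind of local rule that the HEX-lemma example in the paper shows cannot exist with $2$ residual colors, unless you invoke a global structural fact such as bounded treewidth of a few consecutive layers together with the (substantial) theorem that bounded-treewidth classes have asymptotic dimension $1$. You should either explicitly import that theorem or abandon this route in favor of the density approach, which is what the paper does and which unifies (i) and (ii).

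In short: the discharging intuition for (ii) is in the right direction (the paper does discharge, in Lemma~\ref{lemma-spaspar}), but the discharging in the paper proves a sparsity bound for graphs \emph{without} certain substructures, and those substructures are then removed precisely because they are opaque. Without the opacity concept and without a treatment of separating $t$-cycles, the induction you propose does not close.
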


It is tempting to ask whether Theorem~\ref{thm-main} could be strengthened by replacing weak diameter with diameter,
where we measure the diameter of each monochromatic connected subgraph inside the subgraph (not
in the ambient graph).  However, such a strengthening of both (i) and (ii) is false even for the class of planar graphs and non-list coloring, as we show in Section~\ref{sec-cex}.

We suspect the following substantial relaxation of the ``triangle-free'' assumption could be sufficient to
make coloring by two colors possible.
\begin{question}\label{quest-near}
For a non-negative integer $r$, let $\GG_r$ be the class of plane graphs with no separating triangles such
that each vertex is at distance at most $r$ from a face of length at least four.  Does $\GG_r$ have
weak diameter chromatic number at most two?
\end{question}
The importance of the presence of non-triangular faces can be seen from the following standard example:
Let $G_n$ be the $n\times n$ grid with a diagonal added to each face (so that all faces except for the outer
one are triangles). By the HEX lemma, any coloring of $G_n$ by colors red and blue contains a red path from the left side
to the right side of the grid, or a blue path from the top side to the bottom side of the grid.
Hence, any coloring of $G_n$ by two colors contains a monochromatic component of weak diameter at least $n-1$.

Moreover, let us remark that Question~\ref{quest-near} has positive answer in the case that the graph has exactly one
non-triangular face, since plane graphs where all vertices are at a bounded distance from a fixed face have bounded treewidth~\cite{rs3}
and consequently asymptotic dimension one~\cite{bonamy2021asymptotic}.

The rest of the paper is organized as follows.  In Section~\ref{sec-cex}, we show that weak diameter cannot be
replaced by diameter in Theorem~\ref{thm-main}.  In Section~\ref{sec-general}, we give the proof of Theorem~\ref{thm-main},
deferring the parts specific to the triangle-free case and to the non-triangle-free case to sections
Section~\ref{sec-trfree} and \ref{sec-3choos}.

\section{Counterexamples for diameter coloring}\label{sec-cex}

In this section, for every positive integer $\ell$ we construct
\begin{itemize}
\item a planar triangle-free graph $G_\ell$ such that any coloring of $G_\ell$ by two colors
contains a monochromatic component of diameter at least $\ell$, and
\item a planar graph $G'_\ell$ such that any coloring of $G'_\ell$ by three colors
contains a monochromatic component of diameter at least $\ell$.
\end{itemize}
This shows that it is necessary to consider the weak diameter in both cases of Theorem~\ref{thm-main}.

\subsection{The triangle-free case}

For a positive integer $k$, consider the following graphs $H_{0,k}$, $H_{1,k}$, \ldots, each with two distinct \emph{interface vertices} $u$ and $v$:
The graph $H_{0,k}$ consists just of the vertices $u$ and $v$.  For $i\ge 1$, the graph $H_{i,k}$ consists of a path $P=v_1\ldots v_k$
such that for $j=1,\ldots, k$, $v_j$ is adjancent to $u$ if $j$ is odd and to $v$ if $j$ is even,
and $k$ copies of $H_{i-1,k}$ such that the $j$-th one has interface vertices $v_j$ and $v$ if $j$ is odd and $v_j$ and $u$ if $j$ is even;
see Figure~\ref{fig-cex}.
Note that $H_{i,k}$ is planar, triangle-free, and can be drawn so that its interface vertices are incident with the outer face.
For a coloring $\varphi$ and a vertex $x$, let $r_\varphi(x)$ denote the maximum distance between $x$ and another vertex in the same
monochromatic component of $\varphi$.

\begin{figure}
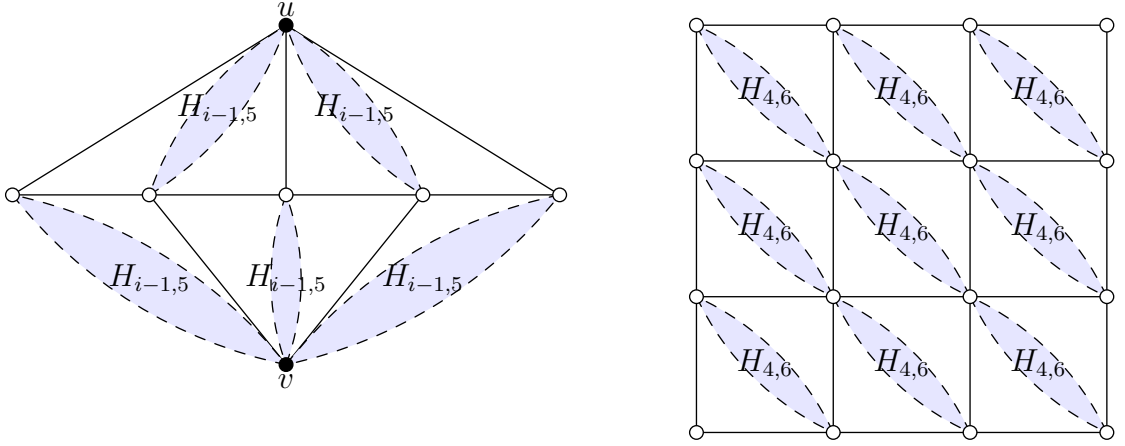

\begin{center}
\begin{asy}
v[0] = (0,0);
v[1] = (-4,-2.5);
v[2] = (-2,-2.5);
v[3] = (0,-2.5);
v[4] = (2,-2.5);
v[5] = (4,-2.5);
v[6] = (0,-5);

for (i = 1; i < 5; ++i)
  draw(v[i] -- v[i+1]);
draw (v[0]--v[1]);
draw (v[0]--v[3]);
draw (v[0]--v[5]);
draw (v[6]--v[2]);
draw (v[6]--v[4]);

for (i = 1; i <= 5; ++i)
  {
    int t = (i

    real ait = degrees(v[t] - v[i]);

    filldraw (v[i]{dir(ait - 20)}..{dir(ait+20)}v[t]{dir(180+ait-20)}..{dir(180+ait+20)}cycle, interp(blue,white, 0.9), dashed);
    label("$H_{i-1,5}$", interp(v[i],v[t],0.5));
  }

for (i = 1; i <= 5; ++i)
  vertex (v[i]);
vertex(v[0], black);
vertex(v[6], black);
label("$u$", v[0], N);
label("$v$", v[6], S);

v[10] = (6,0);
int j;
real eln = 2;
for (i = 0; i < 4; ++i)
  for (j = 0; j < 4; ++j)
    {
      if (i < 3)
        draw (v[10] + eln * (i, -j) -- v[10] + eln * (i+1, -j));
      if (j < 3)
        draw (v[10] + eln * (i, -j) -- v[10] + eln * (i, -j-1));
    
      if (i < 3 && j < 3)
        {
	  pair s = v[10] + eln * (i, -j);
	  pair t = s + eln * (1, -1);

          real ait = degrees(t - s);

          filldraw (s{dir(ait - 20)}..{dir(ait+20)}t{dir(180+ait-20)}..{dir(180+ait+20)}cycle, interp(blue,white, 0.9), dashed);
          label("$H_{4,6}$", interp(s,t,0.5));
        }
    }

for (i = 0; i < 4; ++i)
  for (j = 0; j < 4; ++j)
    vertex (v[10] + eln * (i, -j));
\end{asy}
\end{center}
\caption{The graphs $H_{i,5}$ and $G_3$.}\label{fig-cex}
\end{figure}

\begin{lemma}\label{lemma-nomo}
For all integers $i\ge 0$ and $k\ge 1$, if a coloring $\varphi$ assigns colors $1$ and $2$ to vertices of the graph $H_{i,k}$
and both interface vertices $u$ and $v$ receive color $1$, then
\begin{itemize}
\item $\varphi$ contains a component of color $2$ of diameter at least $k-1$, or 
\item $u$ and $v$ are in the same monochromatic component of $\varphi$, or
\item $r_\varphi(u)+r_\varphi(v)\ge i$.
\end{itemize}
\end{lemma}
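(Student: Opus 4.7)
The plan is to proceed by induction on $i$. The base case $i=0$ is immediate: $H_{0,k}$ consists only of the two isolated vertices $u$ and $v$, so each forms a singleton monochromatic component, $r_\varphi(u)+r_\varphi(v)=0$, and the third alternative holds.

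For the inductive step, I first inspect the colors along the central path $P=v_1\cdots v_k$. If every $v_j$ has color~$2$, then $P$ lies in a color-$2$ component $C$; because each pendant copy $H_{i-1,k}^{(\ell)}$ is attached to the rest of $H_{i,k}$ only through $v_\ell$ and a color-$1$ interface vertex ($u$ or $v$), the pendants offer no color-$2$ shortcut between distinct path vertices, so $d_C(v_1,v_k)=k-1$ and the first alternative holds. Otherwise, some $v_j$ has color~$1$; by the symmetry of the construction under interchanging $u$ and $v$, I may assume $j$ is odd, so that $uv_j$ is an edge. I then apply the induction hypothesis to the pendant $S=H_{i-1,k}^{(j)}$, whose interface vertices $v_j$ and $v$ are both of color~$1$. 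The first two alternatives of the hypothesis transfer verbatim: a color-$2$ component of $S$ of diameter at least $k-1$ has its boundary inside the color-$1$ set $\{v_j,v\}$, so it is also a color-$2$ component of $H_{i,k}$ with the same induced subgraph and hence the same diameter; and a color-$1$ path between $v_j$ and $v$ inside $S$, extended by the edge $uv_j$, places $u$ and $v$ in the same color-$1$ component of $H_{i,k}$.

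In the remaining subcase $r^S_\varphi(v_j)+r^S_\varphi(v)\ge i-1$ I may also assume the second alternative fails in $H_{i,k}$, forcing the color-$1$ components of $u$ and $v$ in $H_{i,k}$ to be disjoint. Choose color-$1$ witnesses $w,w'\in S$ realising $r^S_\varphi(v_j)$ and $r^S_\varphi(v)$ respectively; the disjointness makes both interior to $S$, and since $w$ lies in the color-$1$ component of $u$ in $H_{i,k}$ and $w'$ in that of $v$, the goal reduces to $d_{H_{i,k}}(u,w)+d_{H_{i,k}}(v,w')\ge i$. A shortest $u$-$w$ path must enter $S$ either at $v_j$ (giving length $1+d_S(v_j,w)$, the $+1$ being the edge $uv_j$) or at $v$ (giving length $d_{H_{i,k}}(u,v)+d_S(v,w)$), with an analogous dichotomy for $v$-$w'$; a short case analysis across the four combinations, using the triangle inequality inside $S$ and the extra $+1$ contributed by $uv_j$, yields the bound. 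I expect the main obstacle to be precisely this last case: one must rule out the possibility that the shortcuts available in $H_{i,k}\setminus S$ shrink both witness distances simultaneously, and the disjointness of the color-$1$ components of $u$ and $v$ (guaranteed by negating the second alternative) is the crucial ingredient that prevents such a double collapse.
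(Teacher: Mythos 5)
Your base case, the observation that all of $P$ colored~$2$ yields a color-$2$ component of diameter $k-1$, and the transfer of the first two alternatives from the pendant $S$ are all correct and match the paper. The problem is in the third alternative, where you diverge from the paper and the argument breaks.

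You reduce the goal $r_\varphi(u)+r_\varphi(v)\ge i$ to $d_{H_{i,k}}(u,w)+d_{H_{i,k}}(v,w')\ge i$, with the distances now taken in the \emph{ambient} graph $H_{i,k}$ (and in $S$ for the witnesses). This throws away the color information, and the resulting claim is simply not strong enough. A shortest $v$--$w'$ path in $H_{i,k}$ is free to exit $S$ at $v$, hop across the outside along $v\,v_{j\pm1}\,v_j$ (length $2$), and re-enter $S$ at $v_j$; since the two interface vertices of $S$ are at distance $3$ inside $S$, this really can be a shortcut, giving $d_{H_{i,k}}(v,w')\le 2+d_S(v_j,w')<d_S(v,w')$. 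A symmetric shortcut can shrink $d_{H_{i,k}}(u,w)$ below $1+d_S(v_j,w)$. Running your four-way case analysis with these estimates loses roughly one unit of distance relative to the inductive hypothesis, and this loss compounds at every level of the recursion, so ``the triangle inequality inside $S$'' does not rescue it. The ``disjointness of the color-$1$ components'' that you invoke is of no use here precisely because the shortcut paths are not monochromatic and are perfectly entitled to pass through vertices of the other component.

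The paper's proof sidesteps all of this by interpreting $r_\varphi$ via monochromatic paths: it considers only paths lying inside the monochromatic component. Then the assumption that $u$ and $v$ are in different color-$1$ components does real work: a monochromatic path from $u$ cannot pass through $v$, and since $v$ and $v_j$ are the only attachment points of $S$, any monochromatic $u$--$z$ path with $z$ inside $S$ enters $S$ at $v_j$ and stays there, giving $r_\varphi(u)\ge r_{\varphi'}(v_j)+1$ directly, with no case analysis. Symmetrically, a monochromatic path from $v$ cannot use $v_j$ (it lies in the component of $u$), so it stays in $S$ and $r_\varphi(v)\ge r_{\varphi'}(v)$. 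Summing gives $\ge i$. To fix your write-up, replace every $d_{H_{i,k}}$ and $d_S$ in the last paragraph with the distance within the relevant monochromatic component, and use the disjointness hypothesis to forbid the path from touching the ``wrong'' interface vertex rather than trying to enumerate which interface vertex an unconstrained shortest path uses.

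A minor additional slip: ``the disjointness makes both interior to $S$'' is not quite right; it rules out $w=v$ and $w'=v_j$, but $w=v_j$ is possible (when $r_{\varphi'}(v_j)=0$). This does not matter once you use monochromatic paths, but it is another symptom of trying to argue about graph geometry rather than about the components.
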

\begin{proof}
We prove the claim by induction on $i$.  The case $i=0$ is trivial, and thus we can assume $i\ge 1$.
Let $P=v_1v_2\ldots v_k$ be the path in $H_{i,k}$ from the definition.
Suppose that $\varphi$ does not contain a component of color $2$ of diameter at least $k-1$;
then the whole path $P$ cannot be colored by color $2$, and thus $\varphi(v_j)=1$ for some $j\in\{1,\ldots,k\}$.  By symmetry, we can assume that $j$ is odd.
Consider the copy of $H_{i-1,k}$ with interface vertices $v$ and $v_j$, and let $\varphi'$ be the restriction of of $\varphi$ to this copy.
Then $\varphi'$ also cannot contain a component of color $2$ of diameter at least $k-1$.

Moreover, suppose that $u$ and $v$ are in different monochromatic components of $\varphi$; since $uv_j$ is an edge and $\varphi(u)=\varphi(v_j)=1$,
this implies that $v$ and $v_j$ are not in the same monochromatic component of $\varphi'$.

By the induction hypothesis, it follows that $r_{\varphi'}(v_j)+r_{\varphi'}(v)\ge i-1$.  Since $u$ and $v$ are in different monochromatic components,
a monochromatic path from $u$ cannot pass through $v$, and thus $r_\varphi(u)\ge r_{\varphi'}(v_j)+1$.  Similarly, $r_\varphi(v)\ge r_{\varphi'}(v)$.
Therefoore, $r_\varphi(u)+r_\varphi(v)\ge (r_{\varphi'}(v_j)+1)+r_{\varphi'}(v)\ge i$.
\end{proof}

Let $W_\ell$ be the $(\ell+1)\times (\ell+1)$ grid with diagonals added to the 4-faces.
Let $G_\ell$ be the graph obtained from $W_\ell$ by replacing each diagonal $uv$ by
a copy of $H_{\ell+1,2\ell}$ with interface vertices $u$ and $v$, see the right part of Figure~\ref{fig-cex}.
Consider any 2-coloring of $G_\ell$.  By the HEX lemma, the corresponding $2$-coloring of $W_\ell$
contains a monochromatic path $Q$ (say in color $1$) joining the opposite sides of the grid.  If both ends
of $Q$ belong to the same monochromatic component of $\varphi$ on $G_\ell$, then this component has diameter
greater than $\ell$.  Otherwise, there exists an edge $uv\in E(Q)$ such that $u$ and $v$ belong to different
monochromatic components of $\varphi$, and thus also to different monochromatic components of $\varphi$ restricted
to the copy of $H_{\ell+1,2\ell}$ with interface vertices $u$ and $v$.  By Lemma~\ref{lemma-nomo}, this implies
that either one of the monochromatic components of $u$ and $v$ has diameter at least $\ell$, or $\varphi$ contains
a component of color $2$ of diameter at least $\ell$.

\subsection{The non-triangle-free case}

\begin{figure}
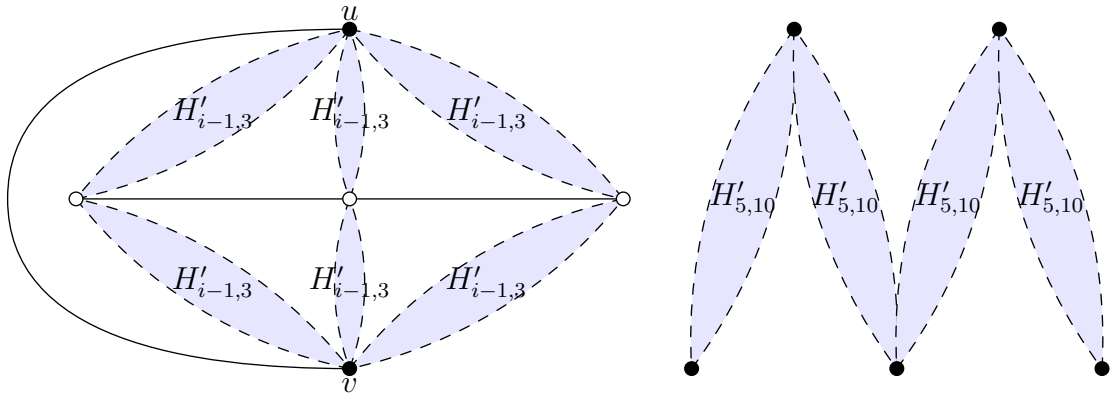

\begin{center}
\begin{asy}
v[0] = (0,0);
v[1] = (-4,-2.5);
v[2] = (0,-2.5);
v[3] = (4,-2.5);
v[4] = (0,-5);

for (i = 1; i < 3; ++i)
  draw(v[i] -- v[i+1]);

draw (v[0]{dir(180)} .. (-5,-2.5) .. {dir(0)}v[4]);

int t;
for (i = 1; i <= 3; ++i)
  for (t = 0; t <= 4; t+=4)
    {
      real ait = degrees(v[t] - v[i]);

      filldraw (v[i]{dir(ait - 20)}..{dir(ait+20)}v[t]{dir(180+ait-20)}..{dir(180+ait+20)}cycle, interp(blue,white, 0.9), dashed);
      label("$H'_{i-1,3}$", interp(v[i],v[t],0.5));
    }

for (i = 1; i <= 3; ++i)
  vertex (v[i]);
vertex(v[0], black);
vertex(v[4], black);
label("$u$", v[0], N);
label("$v$", v[4], S);

v[10] = (5,-5);
v[11] = (6.5,0);
v[12] = (8,-5);
v[13] = (9.5,0);
v[14] = (11,-5);

for (i = 0; i < 4; ++i)
  {
    int s = 10+i;
    int t = 11+i;

    real ait = degrees(v[t] - v[s]);

    filldraw (v[s]{dir(ait - 20)}..{dir(ait+20)}v[t]{dir(180+ait-20)}..{dir(180+ait+20)}cycle, interp(blue,white, 0.9), dashed);
    label("$H'_{5,10}$", interp(v[s],v[t],0.5));
  }

for (i = 0; i < 5; ++i)
  vertex(v[10+i], black);
\end{asy}

\end{center}
\caption{The graphs $H'_{i,3}$ and $G'_4$.}\label{fig-cex2}
\end{figure}

For the case of general planar graphs colored by three colors, we use a similar construction.  The graph $H'_{i,k}$ whose recursive
construction is depicted in Figure~\ref{fig-cex2} has the following property: Suppose that the interface vertices $u$ and $v$
receive colors $1$ and $2$, respectively.  Then
\begin{itemize}
\item the $k$-vertex path is colored by $3$ (resulting in a monochromatic component
of diameter at least $k-1$), or
\item a vertex $x$ of the path has color $1$, there exists a copy of $H'_{i-1,k}$ with interface vertices
$x$ and $v$ of colors $1$ and $2$, and the monochromatic component of $x$ additionally contains the edge $xu$, or
\item a symmetric situation with a vertex $x$ of the path receiving color $2$.
\end{itemize}
As in the previous case, this implies that any coloring of $H'_{\ell+1,2\ell}$
by three colors where the interface vertices receive a different color contains a monochromatic component of diameter at least $\ell$.
The graph $G'_{\ell}$ is then obtained by concatenating $\ell$ copies of this graph,
as depicted on the right side of Figure~\ref{fig-cex2}.

\section{The proof of Theorem~\ref{thm-main}}\label{sec-general}

In this section we present the common parts of the proofs of Theorem~\ref{thm-main} (i) and (ii). 
We use $c$ to denote the number of colors
in each list and $t$ the lower bound on the girth of the considered graph, where 
 $c = 3$ and $t=3$ for the proof of Theorem~\ref{thm-main} (i), while  $c = 2$ and $t=4$ for Theorem~\ref{thm-main} (ii).
 Observe that $t=\tfrac{2c}{c-1}$ in both cases.

The starting point of our proof is a standard island argument.
A non-empty set $I\subseteq V(G)$ is a \emph{$c$-island} if every vertex in $I$ has less than $c$ neighbors outside of $I$.
For real numbers $a$ and $b$, we say that a graph $G$ is \emph{$(a,b)$-sparse} if $|E(G)|\le a|V(G)|+b$,
and \emph{hereditarily $(a,b)$-sparse} if every induced subgraph of $G$ is $(a,b)$-sparse.
In~\cite{islands} we proved the following claim.
\begin{lemma}\label{lemma-islands}
For all positive integers $c$ and $b$, any real number $\varepsilon>0$,
and every surface $\Sigma$, there exists a positive integer $s$ such that the following claim holds:
Every $(c-\varepsilon,b)$-sparse graph $G$ with $V(G)\neq\emptyset$ drawn on $\Sigma$ contains a $c$-island of size at most $s$.
\end{lemma}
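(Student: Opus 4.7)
The plan is to argue by contradiction: assume $G$ is drawn on $\Sigma$, is $(c-\varepsilon,b)$-sparse, and has no $c$-island of size at most $s$, where $s=s(c,b,\varepsilon,\Sigma)$ will be chosen at the end. The goal is to exhibit an induced subgraph $H$ of $G$ with $|V(H)|\le s$ and $|E(H)|>(c-\varepsilon)|V(H)|+b$, directly violating the sparsity hypothesis.

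To construct such $H$, I would run a greedy-growth procedure. Starting with $S_0=\{v_0\}$ for an arbitrary vertex $v_0$, I build a strictly increasing chain $S_0\subsetneq S_1\subsetneq\cdots$ by adding exactly one new vertex per step. Whenever $|S_i|\le s$, the non-$c$-island assumption supplies some $u\in S_i$ with at least $c$ neighbors in $V(G)\setminus S_i$, and I adjoin one such neighbor to form $S_{i+1}$. The quantity to control is $|E(G[S_i])|$: starting from $|E(G[S_0])|=0$, one needs $|E(G[S_s])|>(c-\varepsilon)s+b$, so the number of edges added per step must average more than $c-\varepsilon$.

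The surface hypothesis is used to guarantee this edge-growth rate through Euler-type bounds: $G[S_i\cup N(S_i)]$ is also drawn on $\Sigma$ and therefore has edge density bounded in terms of the Euler characteristic. Concretely, when many vertices of $S_i$ each have $\ge c$ exterior-neighbors, the bipartite subgraph between $S_i$ and its exterior cannot have too many vertices of only one neighbor in $S_i$ without exceeding the Euler bound; combined with the lower bound on exterior half-edges supplied by the non-$c$-island hypothesis, this forces some exterior vertex to share multiple neighbors with $S_i$, and choosing such a vertex to add makes $|E(G[S_i])|$ grow by more than one. I would package this via the potential $\Phi(S):=(c-\varepsilon)|S|+b-|E(G[S])|$, showing that with the right selection rule $\Phi(S_i)$ decreases by a positive amount on average; since $\Phi(S_0)=c-\varepsilon+b$ is bounded, $\Phi$ must go negative, yielding the desired contradiction.

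The principal obstacle is exactly this quantitative amplification. A purely local greedy rule only certifies that one new edge is added per step (the edge from the freshly added vertex to $u$), which is far too weak when $c-\varepsilon>1$. The missing factor must come from the surface structure, likely via an averaging argument summing the ``exterior degree deficit'' over all vertices of $S_i$ with large outside-degree and matching it against an Euler-formula upper bound on the bipartite edge count. Once this amplification is in place, choosing $s$ is routine: it must be large enough that the initial constant $b+(c-\varepsilon)$ in $\Phi(S_0)$ is absorbed, with the bound depending only on $c$, $b$, $\varepsilon$, and the Euler genus of $\Sigma$.
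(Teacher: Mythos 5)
The paper does not contain a proof of this lemma---it is cited from an earlier paper of the authors (the reference labelled ``islands'')---so there is no internal argument to compare against. Nonetheless your proposal has a genuine gap. The intended contradiction does not follow from the hypotheses: you aim to produce an induced subgraph $H$ of bounded size with $|E(H)|>(c-\varepsilon)|V(H)|+b$ and claim this ``directly violat[es] the sparsity hypothesis.'' It does not: $(c-\varepsilon,b)$-sparsity bounds $|E(G)|$ only for $G$ itself and imposes no constraint on induced subgraphs. The paper carefully distinguishes $(a,b)$-sparse from \emph{hereditarily} $(a,b)$-sparse for exactly this reason, and Lemma~\ref{lemma-islands} assumes only the former (the hereditary variant appears only later, in Corollary~\ref{cor-colin}). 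A sparse graph can certainly contain a dense small piece. The only density constraint that does apply to every induced subgraph of $G$ is the Euler bound $|E(H)|\le 3|V(H)|+O(g)$ coming from $\Sigma$; but for the values $c\in\{2,3\}$ used in this paper, your threshold $(c-\varepsilon)|V(H)|+b$ lies below the Euler bound once $|V(H)|$ is large, so even reaching it would contradict nothing.

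The second gap is the one you flag yourself, but it is worse than you suggest. The non-island assumption supplies one vertex $u\in S_i$ with at least $c$ exterior neighbours---not a statement about all of $S_i$---so there is no useful lower bound on the number of edges between $S_i$ and its exterior to play against an Euler-type bound. The greedy step guarantees only one new edge per added vertex, so $\Phi$ can go \emph{up} by as much as $c-\varepsilon-1>0$ at every step, and nothing in the local rule prevents that. The $n\times n$ grid (which is $(3-\varepsilon,b)$-sparse for small $\varepsilon$) is instructive: greedy growth there accumulates edges at density around $1$, far below $3-\varepsilon$, and your argument only fails to become a counterexample because the process stumbles onto a $3$-island (a $2\times2$ block) almost immediately. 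But ``the greedy process always stumbles onto an island before its density stalls'' is essentially the content of the lemma; assuming it would be circular. A working proof must exploit the surface structure in a qualitatively different way than a one-vertex-at-a-time density count.
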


The presence of $c$-islands can be used to obtain clustered colorings.

\begin{corollary}\label{cor-colin}
For all positive integers $c$ and $b$, any real number $\varepsilon>0$,
and every surface $\Sigma$, the class of hereditarily $(c-\varepsilon,b)$-sparse
graphs drawn on $\Sigma$ has clustered choosability at most $c$.
\end{corollary}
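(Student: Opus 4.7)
The plan is a straightforward induction on $|V(G)|$, using Lemma~\ref{lemma-islands} as the key tool that lets us peel off a bounded-size piece of $G$ at each step. Concretely, let $s$ be the constant supplied by Lemma~\ref{lemma-islands} for the given $c$, $b$, $\varepsilon$, and $\Sigma$; I claim that every hereditarily $(c-\varepsilon,b)$-sparse graph $G$ drawn on $\Sigma$ admits, from any list assignment $L$ with $|L(v)|\ge c$, an $L$-coloring with clustering at most $s$.

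For the inductive step, assume $V(G)\neq\emptyset$. Since $G$ is $(c-\varepsilon,b)$-sparse and drawn on $\Sigma$, Lemma~\ref{lemma-islands} supplies a $c$-island $I\subseteq V(G)$ with $|I|\le s$. The graph $G-I$ is an induced subgraph of $G$, so it is again hereditarily $(c-\varepsilon,b)$-sparse and drawn on $\Sigma$; by induction it has an $L$-coloring $\varphi_0$ with clustering at most $s$. To extend $\varphi_0$ to $G$, iterate through the vertices of $I$ in any order: for each $v\in I$, the island condition tells us that $v$ has fewer than $c$ neighbors in $V(G)\setminus I$, so at most $c-1$ colors from $L(v)$ are forbidden by $\varphi_0$; since $|L(v)|\ge c$, we can choose $\varphi(v)\in L(v)$ avoiding the colors already assigned by $\varphi_0$ to neighbors of $v$ outside $I$.

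The essential verification is the cluster bound. By construction, every vertex $v\in I$ is assigned a color different from the colors of all its neighbors in $V(G)\setminus I$. Consequently, no edge of $G$ with one endpoint in $I$ and the other in $V(G)\setminus I$ is monochromatic, so every monochromatic component of $\varphi$ is either entirely contained in $I$ (and thus has at most $|I|\le s$ vertices) or entirely contained in $V(G)\setminus I$ (and thus has at most $s$ vertices by the inductive bound on $\varphi_0$). Either way the clustering is at most $s$, completing the induction.

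There is no real obstacle here: Lemma~\ref{lemma-islands} does all the heavy lifting, and the only point that needs care is ensuring the recursion hypothesis is preserved, which is immediate because hereditary $(c-\varepsilon,b)$-sparsity and drawability on $\Sigma$ are both closed under taking induced subgraphs. The main conceptual step worth emphasizing is that avoiding the external-neighbor colors is what decouples the monochromatic components of $I$ from those of $G-I$, so the bound $s$ on a single island transfers directly to a bound on clusters in the final coloring.
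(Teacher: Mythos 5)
Your proof is correct and is essentially the paper's argument: both induct on $|V(G)|$, extract a $c$-island $I$ of size at most $s$ via Lemma~\ref{lemma-islands}, color $G-I$ by the inductive hypothesis, and then color each $v\in I$ avoiding the (at most $c-1$) colors of its neighbors outside $I$, so that no monochromatic component crosses between $I$ and $V(G)\setminus I$.
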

\begin{proof}
Let $s$ be the constant from Lemma~\ref{lemma-islands}.  We show that every hereditarily $(c-\varepsilon,b)$-sparse
graph $G$ drawn on $\Sigma$ has an $L$-coloring with clustering at most $s$ for any assignment $L$ of lists of size $c$.
We prove the claim by induction on the number of vertices of $G$.  The claim is trivial if $G$ has no vertices.
Otherwise, by Lemma~\ref{lemma-islands}, $G$ contains a $c$-island $I$ of size at most $s$.
By the induction hypothesis, $G-I$ has an $L$-coloring with clustering at most $s$.
We color $I$ so that each vertex $v\in I$ chooses a color from $L(v)$ different from the colors of
its neighbor outside of $I$.  This ensures that any newly arising monochromatic components are contained in $I$,
and thus they have size at most $s$.
\end{proof}

Corollary~\ref{cor-colin} ``just barely'' does not apply in the setting of Theorem~\ref{thm-main}. Indeed,
the generalized Euler's formula implies that an $n$-vertex graph of girth at least $t=\tfrac{2c}{c-1}$ drawn on a surface of
Euler genus $g$ has at most $\tfrac{t}{t-2}n+O(g)=cn+O(g)$ edges. Hence, if we are able to make the graph just a bit
sparser, we can finish the argument using Corollary~\ref{cor-colin}.  To this end, let us introduce the notion of \emph{sparsifiers}.

A \emph{multiassignment} for a graph $S$ is an assignment of multisets to vertices of $S$.
For a multiassignment $B$ to vertices of a graph $S$, we say that a coloring of $S$ is \emph{$B$-opaque} if for each color $a$,
each connected subgraph of $S$ of color $a$ contains at most one vertex $v$ such that $a\in B(v)$ and if there is such a vertex $v$,
then $a$ appears in $B(v)$ with multiplicity one.  The motivation for this definition
is as follows: If $S$ is an induced subgraph of a colored graph $G$ and $B(u)$ consists of colors that appear on the neighbors of $u$
in $V(G)\setminus V(S)$, then $B$-opacity implies that no two monochromatic components of $G-V(S)$ are contained in the same monochromatic
component of $G$.

A \emph{$c$-sparsifier} is a pair $(S,\gamma)$, where $S$ is a connected graph and $\gamma:V(S)\to\mathbb{Z}_0^+$ assigns an integer $\gamma(v)\ge \deg v$
to each vertex $v\in V(S)$, with the following property:  For any assignment $L$ of lists of size $c$ to vertices of $S$
and a multiassignment $B$ of lists to vertices of $S$ such that $|B(v)|\le \gamma(v)-\deg v$ for each $v\in V(S)$, there exists
a $B$-opaque $L$-coloring of $S$.  The \emph{size} of the sparsifier is $|V(S)|$.  An \emph{appearance} of a $c$-sparsifier $(S,\gamma)$ in a graph $G$ drawn on a surface
is an injective function $h:V(S)\to V(G)$ such that
\begin{itemize}
\item for $u,v\in V(S)$, we have $uv\in E(S)$ if and only if $h(u)h(v)\in E(G)$ (i.e., $h$ shows $S$ is an induced subgraph of $G$),
\item for $u\in V(S)$, we have $\deg_G h(u)\le \gamma(u)$, and
\item for $u\in V(S)$, every face of $G$ incident with $h(u)$ is bounded by a cycle of length $t=\tfrac{2c}{c-1}$.
\end{itemize}
We write $G-h$ for the graph obtained from $G$ by deleting all vertices in the image of $h$.
Appearances $h_1$ and $h_2$ of $c$-sparsifiers $(S_1,\gamma_1)$ and $(S_2,\gamma_2)$ are \emph{independent}
if $h_1(u)\neq h_2(v)$ and $h_1(u)h_2(v)\not\in E(G)$ for every $u\in V(S_1)$ and $v\in V(S_2)$.

The definition of a sparsifier and its appearance is motivated by the following properties.
\begin{lemma}\label{lemma-delspars}
Let $G$ be a graph drawn on a surface, let $c\ge 2$, $p\ge 1$ and $\ell\ge 0$ be integers,
let $L$ be an assignment of lists of size $c$ to vertices of $G$, and let $h_1$, \ldots, $h_m$ be pairwise-independent
appearances of $c$-sparsifiers of size at most $p$ in $G$.  If $G-\{h_1,\ldots, h_m\}$ has a weak diameter-$\ell$ $L$-coloring $\varphi$, then
$G$ has a weak diameter-$(\ell+2p)$ $L$-coloring.
\end{lemma}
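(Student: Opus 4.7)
The plan is to extend the weak diameter-$\ell$ $L$-coloring $\varphi$ of $G-\{h_1,\ldots,h_m\}$ to the images of the sparsifiers by applying the defining property of each $c$-sparsifier to a carefully chosen multiassignment, so that each sparsifier image acts as a bounded decoration of existing monochromatic subgraphs rather than merging them.

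For each $i$, let $B_i(u)$ be the multiset of $\varphi$-colors of the neighbors of $h_i(u)$ that lie outside $\bigcup_j h_j(V(S_j))$. By pairwise independence of the appearances, $h_i(u)$ has no neighbor in $h_j(V(S_j))$ for $j\ne i$, so its remaining neighbors in $\bigcup_j h_j(V(S_j))$ are precisely the $\deg_{S_i}(u)$ vertices in $h_i(V(S_i))$; together with $\deg_G h_i(u)\le \gamma_i(u)$ this yields $|B_i(u)|\le \gamma_i(u)-\deg_{S_i}(u)$. Applying the $c$-sparsifier property to the list assignment $u\mapsto L(h_i(u))$ of size $c$ and the multiassignment $B_i$ on $S_i$ produces a $B_i$-opaque $L$-coloring $\varphi_i$ of $S_i$; extend $\varphi$ by coloring each $h_i(u)$ with $\varphi_i(u)$.

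To bound the weak diameter, consider any monochromatic connected subgraph $H$ of color $a$ in the combined coloring, and split $V(H)$ into old vertices (outside every sparsifier image) and new vertices. Pairwise independence again implies that no edge of $G$ runs between two distinct images $h_i(V(S_i))$ and $h_j(V(S_j))$, so each connected component of $H$ restricted to the new vertices lies in a single $h_i(V(S_i))$ and corresponds via $h_i$ to a connected color-$a$ subgraph of $S_i$. By $B_i$-opacity, each such \emph{chunk} contains at most one vertex $v$ with $a\in B_i(v)$, with $a$ appearing there with multiplicity one, i.e., has at most one edge of $G$ to an old vertex of color $a$. Hence a chunk cannot bridge two distinct old monochromatic components of $\varphi$: either $H$ is a single chunk of size at most $p$ and has weak diameter at most $p-1$ in $G$, or the old vertices of $H$ all lie in a single monochromatic component $H^{\mathrm{old}}$ of $\varphi$ of weak diameter at most $\ell$ (in $G-\{h_1,\ldots,h_m\}$, hence also in $G$), and each new chunk in $H$ is attached to $H^{\mathrm{old}}$ by a single edge. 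Routing any two vertices of $H$ through their respective attachment points in $H^{\mathrm{old}}$ and using that each chunk has diameter at most $p-1$ then yields a weak diameter of at most $p+\ell+p=\ell+2p$.

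The main conceptual step is the structural assertion that $B_i$-opacity forces each new chunk to hang off at most one old component of color $a$; once this is established, the additive diameter bound is a direct triangle-inequality computation.
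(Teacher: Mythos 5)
Your proof is correct and takes essentially the same approach as the paper's: construct the multiassignment $B_i$ from the $\varphi$-colors of neighbors of $h_i(u)$ lying outside all sparsifier images, verify the bound $|B_i(u)|\le\gamma_i(u)-\deg_{S_i}(u)$ via pairwise independence and the degree condition, invoke the sparsifier property to get a $B_i$-opaque extension, and then observe that opacity prevents a chunk from merging two old monochromatic components. The only difference is that you spell out the additive weak-diameter computation more explicitly than the paper, which simply states that each new component is contained in one image or is an old component with disjoint, pairwise non-adjacent attachments of size at most $p$.
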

\begin{proof}
For $i\in \{1,\ldots,m\}$, we extend $\varphi$ to the image of $h_i$ as follows.  Let $(S_i,\gamma_i)$ be the $c$-sparsifier with appearance $h_i$.
For $u\in V(S_i)$, let $L_i(u)=L(h_i(u))$ and $B_i(u)=\{\varphi(v):vh_i(u)\in E(G),v\in V(G-\{h_1,\ldots, h_m\})\}$.  Since $\deg_G h_i(u)\le \gamma_i(u)$,
we have $|B_i(u)|\le \gamma_i(u)-\deg_S u$.  By the definition of a $c$-sparsifier, there exists a $B_i$-opaque $L_i$-coloring $\psi_i$
of $S_i$, and for each $u\in V(S_i)$, we define $\varphi(h_i(u))=\psi_i(u)$.

Since $\psi_i$ is $B_i$-opaque for each $i$, each monochromatic component of $G$ in the coloring $\varphi$ is either contained
in the image of $h_i$ for some $i$, or it is obtained from a monochromatic component of $G-\{h_1,\ldots, h_m\}$ by adding disjoint
non-adjacent connected subgraphs with at most $p$ vertices.  We conclude that each monochromatic component of $G$ has weak diameter at most $\ell+2p$.
\end{proof}

A system $h_1$, \ldots, $h_m$ of pairwise-independent appearances of $c$-sparsifiers of size at most $p$ in a graph $G$ drawn on a surface is \emph{maximal}
if there does not exist an appearance of a $c$-sparsifier of size at most $p$ in $G$ independent of $h_1$, \ldots, $h_m$.
A graph $G$ is \emph{$(c,p)$-sparsifier-free} if no $c$-sparsifier of size at most $p$ has an appearance in $G$.

\begin{lemma}\label{lemma-nonewspars}
Let $G$ be a graph drawn on a surface, let $c\ge 2$ and $p\ge 1$ be integers,
and let $h_1$, \ldots, $h_m$ be pairwise-independent appearances of $c$-sparsifiers of size at most $p$ in $G$.
Let $t=\tfrac{2c}{c-1}$.  If the system $h_1$, \ldots, $h_m$ is maximal, $|V(G)|>t$ and $G$ does not contain any separating
cycle of length $t$, then every induced subgraph $G'$ of $G-\{h_1,\ldots,h_m\}$ is $(c,p)$-sparsifier-free.
\end{lemma}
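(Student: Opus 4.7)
The plan is to argue by contradiction via the maximality of the system $h_1,\ldots,h_m$. Suppose some induced subgraph $G'$ of $G-\{h_1,\ldots,h_m\}$ contains an appearance $h'$ of a $c$-sparsifier $(S',\gamma')$ of size at most $p$. I will show that the very same map $h'\colon V(S')\to V(G)$, viewed now as a function into the full graph $G$, is already an appearance of $(S',\gamma')$ in $G$ that is independent of $h_1,\ldots,h_m$, contradicting maximality. Two parts of the conclusion are free of charge: $h'(V(S'))\subseteq V(G')$ is disjoint from $\bigcup_i h_i(V(S_i))$, so the images do not collide, and because $G'$ is induced in $G$ the equality $G[h'(V(S'))]=S'$ carries over automatically from $G'$ to $G$.

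Everything else---the degree bound $\deg_G h'(u)\le\gamma'(u)$, the face condition for $G$, and the absence of edges joining $h'(V(S'))$ to any $h_i$-image---reduces to the following geometric key claim: \emph{for every $u\in V(S')$ and every face $F$ of $G'$ incident with $h'(u)$, the interior of $F$ contains no vertex of $V(G)$.} Indeed, any $G$-neighbor of $h'(u)$ not already in $V(G')$ would have to sit in such an interior, so the claim forces $\deg_G h'(u)=\deg_{G'} h'(u)\le\gamma'(u)$, makes the faces of $G$ around $h'(u)$ coincide with the $t$-cycle-bounded faces of $G'$ around $h'(u)$, and rules out any edge from $h'(V(S'))$ to an $h_i$-image.

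To prove the key claim, I would assume for contradiction that a face $F$ of $G'$ incident with $h'(u)$, bounded by the $t$-cycle $C_F$, has an interior vertex $v\in V(G)$. Since $F$ is a face of the induced subgraph $G'$, the cycle $C_F$ has no chord in $G$. The plan is to exhibit $C_F$ as a separating $t$-cycle of $G$, contradicting the hypothesis: the vertex $v$ witnesses a $G$-vertex on the $F$-side of $C_F$, while any element of $V(G')\setminus V(C_F)$ witnesses a $G$-vertex on the opposite side, since $F$ is a face of $G'$ forces such a vertex into $\Sigma\setminus\overline F$. This handles every case except the residual one $V(G')=V(C_F)$ (equivalently $G'=C_F$), which I would treat by running the same analysis on the opposite face $F_c$ of $G'$ at $h'(u)$: either $F_c^{\circ}$ also contains a $V(G)$-vertex, so $C_F$ is already separating, or else $V(G)\setminus V(C_F)\subseteq F^\circ$ and one combines the hypothesis $|V(G)|>t$ with the $t$-cycle face boundaries guaranteed around each $h_i$-appearance sitting inside $F$ to extract a separating $t$-cycle from the interior structure of $F$. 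The main obstacle is precisely this last step: when $G'$ collapses to a single $t$-cycle and the remaining vertices of $G$ are all trapped inside one of its faces, pinpointing the separating $t$-cycle requires a careful interaction between $C_F$ and the $t$-cycle faces of $G$ that surround the $h_i$-images inside $F$.
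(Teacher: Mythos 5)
Your proposal tracks the paper's argument precisely: the paper also supposes for contradiction that $G'$ has an appearance $h$, notes that the faces of $G'$ incident with the $h$-images are bounded by $t$-cycles, asserts (in a single sentence, citing the no-separating-$t$-cycle hypothesis, $|V(G)|>t$, and inducedness of $G'$) that these are also faces of $G$, and concludes that $h$ lifts to an appearance in $G$ independent of $h_1,\ldots,h_m$, contradicting maximality. Your ``key claim'' and its reduction to exhibiting a separating $t$-cycle are exactly the intended justification of the paper's one-sentence step, and the reduction itself (degree, face, and independence conditions all following from the claim, using that $G'$ is induced to rule out chords) is sound.

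The residual case $V(G')=V(C_F)$ that you flag is a genuine hole, and it cannot be plugged from the stated hypotheses alone. Take $c=2$, $t=4$, and $G=Q_3$, the cube graph drawn in the plane as two nested $4$-cycles $C_F$ (outer) and $C_H$ (inner) joined by a perfect matching; every $4$-cycle of $Q_3$ bounds a face, so none is separating. Let $h_1$ map the $4$-cycle sparsifier $(S_2,\gamma_2)$ of Lemma~\ref{lemma-spars} onto $C_H$; since every vertex of $G$ lies in or is adjacent to $V(C_H)$, the one-element system $\{h_1\}$ is maximal, and $|V(G)|=8>4$. Yet $G'=G-\{h_1\}=C_F$ carries an appearance of $(S_2,\gamma_2)$, because both faces of a bare $4$-cycle in the plane are bounded by that cycle. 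So the lemma as stated is slightly overstated, and the paper's assertion ``these faces are also faces of $G$'' is not justified in this boundary case. The repair is to add the hypothesis $|V(G')|>t$: then $V(G')\setminus V(C_F)$ supplies a vertex of $G$ on the far side of $C_F$, your separating-cycle argument closes, and this is in fact the only regime in which the lemma is invoked (the proof of Corollary~\ref{cor-nocut} establishes $|V(G')|>t$ before appealing to it). You were right to stop there rather than chase a geometric workaround for the residual case; the issue lies in the lemma's hypotheses, not in a missing clever step.
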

\begin{proof}
Suppose for a contradiction that $h$ is an appearance of a $c$-sparsifier $(S,\gamma)$ of size at most $p$ in $G'$.
By the last condition in the definition of an appearance, all faces incident with the $h$-images of vertices of $S$
are bouded by $t$-cycles.  Since $G$ does not contain separating $t$-cycles, $|V(G)|>t$, and $G'$ is an induced subgraph
of $G$, these faces are also faces of $G$.  In particular, all the vertices in the image of $h$ have the same degree in $G$
as in $G'$.  Hence, $h$ is also an appearance of $(S,\gamma)$ in $G$ independent from $h_1$, \ldots, $h_m$, contradicting the
maximality of the system.
\end{proof}

Finally, we will need the following lemma, whose proof is specific to the cases $c\in\{2,3\}$ and is given in Sections~\ref{sec-trfree}
and \ref{sec-3choos}.
\begin{lemma}\label{lemma-spaspar}
For $c\in\{2,3\}$, there exists a constant $\varepsilon_c>0$ such that the following claim holds.
Let $G$ be a graph of minimum degree at least $c$ and girth at least $t=\tfrac{2c}{c-1}$ drawn on a surface of Euler genus $g$
with no non-contractible cycles of length at most four.  Suppose that $G$ is $(c,4)$-sparsifier-free
and does not contain separating cycles of length $t$.  Then $G$ is $(c-\varepsilon_c,10(g+3))$-sparse.
\end{lemma}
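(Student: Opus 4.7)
The plan is a discharging argument based on Euler's formula. Setting $t=\tfrac{2c}{c-1}$, the girth assumption together with Euler's formula yields the bound $|E(G)|\le c(|V(G)|+g-2)$; the lemma asks for an improvement by a linear term $\eps_c|V(G)|$. A routine computation using $\sum_f\ell(f)=2|E(G)|$ shows that this improvement is equivalent to an inequality of the form $\sum_f(\ell(f)-t)\ge\eps'_c|V(G)|-O(g)$ for an appropriate $\eps'_c>0$. So it suffices to show that a positive fraction of the total face length is excess beyond the girth bound~$t$.

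The structural role of the $(c,4)$-sparsifier-free hypothesis is to forbid small local configurations of low-degree vertices whose incident faces all have length~$t$. The basic example is the one-vertex sparsifier $(\{v\},\gamma)$ with $\gamma(v)=2c-1$: for any $B$ with $|B(v)|\le 2c-1$, at most $c-1$ colors can appear in $B(v)$ with multiplicity at least~$2$, so there is always a color in $L(v)$ of multiplicity at most one, giving a $B$-opaque $L$-coloring of the single vertex. Hence no vertex of degree at most $2c-1$ can have all its incident faces of length exactly~$t$. Analogous but more intricate $B$-opaque coloring arguments verify several additional size-at-most-$4$ sparsifiers, the precise list of which differs between $c=2$ and $c=3$; for instance, for $c=2$ the induced $4$-cycle with $\gamma\equiv 4$ can be certified to be a $2$-sparsifier, and combined with the hypothesis that $G$ has no separating or non-contractible $4$-cycle (so that every $4$-cycle of $G$ is a face) this forces every $4$-face of $G$ to contain a vertex of degree at least~$5$.

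With this catalogue of forbidden configurations in hand, I would discharge as follows. Assign every face $f$ initial charge $\ell(f)-t$ and every vertex initial charge~$0$; the total initial charge is exactly $\sum_f(\ell(f)-t)$, the quantity to be bounded below. Redistribute charge along face-vertex incidences so that long faces pour charge into their low-degree incident vertices (by the $1$-vertex sparsifier, every low-degree vertex must be adjacent to at least one long face), while high-degree vertices donate a small amount to adjacent short faces (by the multi-vertex sparsifiers, each ``extremal'' short face must contain at least one high-degree vertex). A finite case analysis on the possible local degree/face-length patterns at each vertex, using the forbidden-configuration list, then yields that every vertex ends up with final charge at least a positive constant depending only on $c$, and summing recovers the required face-excess bound.

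The main obstacle I expect is twofold. First, enumerating the relevant $c$-sparsifiers of size at most four and verifying $B$-opacity for each is a somewhat delicate combinatorial case analysis that differs significantly between $c=2$ and $c=3$, which is why the proof is split between the case-specific Sections~\ref{sec-trfree} and~\ref{sec-3choos}. Second, calibrating the discharging rates so that the limited face excess available per face suffices to raise every low-degree vertex's final charge to a uniform positive constant---even in the near-triangulation (resp.\ near-quadrangulation) regime where the total Euler slack is very small---will require careful tuning, and the resulting $\eps_c$ will emerge from balancing the contributions over all locally feasible configurations.
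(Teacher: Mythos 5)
Your high-level plan matches the paper's: you correctly reduce the sparsity bound to a lower bound of the form $\sum_f(|f|-t)\ge\varepsilon'|V(G)|-O(g)$, correctly identify the single-vertex $c$-sparsifier with $\gamma(v)=2c-1$, and correctly deduce from it (together with the absence of separating and non-contractible $t$- and $4$-cycles) that every vertex of degree at most $2c-1$ is incident with a face of length greater than $t$. The split into cases $c=2$ and $c=3$ with a case-specific catalogue of small sparsifiers is also what the paper does.

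However, the specific mechanism you propose for the rest of the discharging has a genuine gap, and the gap lies exactly in the sentence you offer as an ``instance.'' It is \emph{not} true that the $2$-sparsifier given by the $4$-cycle $S_2$ with $\gamma_2\equiv 4$ forces every $4$-face of $G$ to contain a vertex of degree at least $5$. By definition, an appearance of $(S_2,\gamma_2)$ in $G$ requires not only that the four image vertices all have degree at most $4$ and induce a $4$-cycle, but also that \emph{every} face of $G$ incident with any of these four vertices is bounded by a $4$-cycle. A single $4$-face with four degree-$4$ vertices, one of which touches a longer face elsewhere, is perfectly compatible with $G$ being $(2,4)$-sparsifier-free. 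So the forbidden configuration is a $2$-neighbourhood condition around the $4$-cycle, not a condition on the face itself. As a consequence, your discharging rule ``high-degree vertices donate to adjacent short faces, because each extremal short face has a high-degree vertex'' cannot be made to work: the short faces you need to pay for may have no high-degree vertex at all. For the same reason, your scheme (charge flows only between faces and vertices) never propagates information to distance two from a vertex, which is the radius at which the sparsifier $(S_2,\gamma_2)$ (for $c=2$) and the chorded-$4$-cycle sparsifier (for $c=3$) actually bite.

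The paper resolves this by discharging vertex-to-vertex in two rounds. For $c=2$, a vertex of degree $d\neq 4$ sends a small amount to each neighbour and to each vertex diagonally opposite across a $4$-face, faces of length at least $5$ send to their incident vertices, and then any degree-$4$ vertex that received charge relays a smaller fraction outward in the same pattern. A degree-$4$ vertex that finishes with no charge must therefore see only degree-$4$ vertices and $4$-faces out to the required radius, at which point one of its incident $4$-faces is an appearance of $(S_2,\gamma_2)$, a contradiction. The $c=3$ case is analogous with triangles and degree $6$. This two-round vertex-to-vertex propagation is the missing idea in your plan; without it the calibration you anticipate cannot succeed, regardless of the constants chosen.
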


Let us now combine these claims.

\begin{corollary}\label{cor-nocut}
For $c \in \{2,3\}$ and every surface $\Sigma$, the class $\GG_{c,\Sigma}$ of graphs of girth at least $t=\tfrac{2c}{c-1}$ drawn on $\Sigma$
with no non-contractible cycles of length at most four and no separating cycles of length $t$ has weak diameter choosability at most $c$.
\end{corollary}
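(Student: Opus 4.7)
The plan is to combine the earlier lemmas in the natural order: remove from $G$ a maximal family of $c$-sparsifier appearances, use Lemma \ref{lemma-spaspar} to bound the density of the remainder, feed the result into Corollary \ref{cor-colin} to obtain a clustered $L$-coloring, and reinstate the removed vertices via Lemma \ref{lemma-delspars}.

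Concretely, given $G\in\GG_{c,\Sigma}$ with a size-$c$ list assignment $L$, the case $|V(G)|\le t$ is immediate, so suppose $|V(G)|>t$. Take a maximal system $h_1,\ldots,h_m$ of pairwise-independent appearances of $c$-sparsifiers of size at most $4$ in $G$ and set $G'=G-\{h_1,\ldots,h_m\}$. Since $G$ has no separating cycle of length $t$, Lemma \ref{lemma-nonewspars} guarantees that every induced subgraph of $G'$ is $(c,4)$-sparsifier-free. I would then upgrade this to the statement that $G'$ is hereditarily $(c-\varepsilon_c,b)$-sparse for some $b=b(\Sigma)$: given an induced subgraph $H$ of $G'$, iteratively peel off vertices of degree less than $c$ to obtain a core $H^*$ of minimum degree at least $c$, noting that the peeled portion is $(c-1)$-degenerate and hence (since $c-1<c-\varepsilon_c$ when $\varepsilon_c<1$) its contribution to the edge count is absorbed into the sparsity constant. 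Applying Lemma \ref{lemma-spaspar} to $H^*$ then yields $|E(H^*)|\le(c-\varepsilon_c)|V(H^*)|+10(g+3)$, from which the bound for $H$ follows. With hereditary sparsity in hand, Corollary \ref{cor-colin} provides an $L$-coloring of $G'$ with clustering at most some $s=s(\Sigma)$ (hence weak diameter at most $s$), and Lemma \ref{lemma-delspars} extends this to a weak diameter-$(s+8)$ $L$-coloring of $G$.

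The principal obstacle is verifying the hypotheses of Lemma \ref{lemma-spaspar} on the core $H^*$. The girth, the absence of short non-contractible cycles, and $(c,4)$-sparsifier-freeness are all inherited automatically from the passage $G\to G'\to H\to H^*$, but the hypothesis that $H^*$ contains no separating cycle of length $t$ is not: a $t$-cycle that is non-separating in $G$ can become separating once enough vertices on one side of it have been deleted. To handle this, I would perform an auxiliary induction on $|V(H^*)|+g$: whenever $H^*$ contains a separating $t$-cycle $C$, split $H^*$ along $C$ into two strictly smaller induced subgraphs $H^*_1,H^*_2$ sharing only $V(C)$ and lying on surfaces of no larger Euler genus (with genera summing to at most $g$ in the orientable case, and one side bounding a disk when $C$ is contractible), apply the recursive bound to each piece, and combine, paying a fixed additive cost of $(c-\varepsilon_c-1)t$ per split. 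Showing that these costs aggregate to a constant depending only on $\Sigma$ — by exploiting that each split either strictly reduces the Euler genus used or splits off a disk region that can be treated as a single atomic piece — is the main technical content that needs to be checked to close the argument.
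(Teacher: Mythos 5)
Your overall plan coincides with the paper's: remove a maximal family of sparsifier appearances, argue that the remainder is hereditarily sparse by peeling low-degree vertices and applying Lemma~\ref{lemma-spaspar} to the core, invoke Corollary~\ref{cor-colin}, and restore the deleted vertices via Lemma~\ref{lemma-delspars}. The step-by-step structure (including the base case $|V(G)|\le t$ and the use of Lemma~\ref{lemma-nonewspars}) matches the paper's proof, which performs the ``peeling'' by a simple induction on $|V(G')|$ rather than a separate core-extraction phase, but these are cosmetic differences.

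The one substantive point of departure is your claimed ``principal obstacle,'' and here the proposal goes astray: the hypothesis that the core contains no separating $t$-cycle \emph{is} inherited from $G$. In this paper a $t$-cycle $K$ is separating when the graph has vertices strictly on both sides of the disk it bounds (as you can see from how the hypothesis is used in the proof of Lemma~\ref{lemma-spaspar}, e.g.\ to conclude that a degree-$3$ vertex with three incident triangular faces would produce a separating triangle). Since $t\le 4$ and there are no short non-contractible cycles, any $t$-cycle $C$ in an induced subgraph $H^*$ of $G$ is a chordless contractible $t$-cycle of $G$ drawn in the same place, bounding a disk $D$; ``non-separating in $G$'' means all of $V(G)\setminus V(C)$ lies on a single side of $D$, and since $V(H^*)\subseteq V(G)$, all of $V(H^*)\setminus V(C)$ lies on that same side. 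Deleting vertices can only turn a separating cycle into a non-separating one, never the reverse. So the hypothesis carries over automatically and the paper, correctly, does not address it.

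Consequently the auxiliary induction on $|V(H^*)|+g$ with splitting along separating $t$-cycles is unnecessary — and fortunate, because as sketched it does not close: in the planar case a split produces two planar pieces with the same genus, so the genus parameter never drops; each split costs an extra copy of the additive constant $b$ and the number of nested splits can grow with $|V(H^*)|$, so the accumulated constant would not depend only on $\Sigma$. You acknowledge this is ``the main technical content that needs to be checked,'' but it is not a gap that can be closed along those lines; the correct resolution is to observe there was no obstruction in the first place. Once you drop that detour, the rest of your argument is the paper's proof (modulo the harmless off-by-one in the final bound: clustering $\le s$ gives weak diameter $\le s-1$, so Lemma~\ref{lemma-delspars} with $p=4$ yields $s+7$, not $s+8$).
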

\begin{proof}
Let $\varepsilon_c>0$ be the constant from Lemma~\ref{lemma-spaspar} and let $g$ be the Euler genus of $\Sigma$.
By Corollary~\ref{cor-colin}, there exists $s$ such that every hereditarily $(c-\varepsilon_c,10(g+3))$-sparse
graph drawn on $\Sigma$ has a coloring with clustering at most $s$ from any assignment of lists of size $c$.

Consider a graph $G\in \GG_{c,\Sigma}$ and an assignment $L$ of lists of size $c$ to vertices of $G$.
Let $h_1$, \ldots, $h_m$ be a maximal system of pairwise-independent $c$-sparsifiers of size at most $4$ in $G$.
Let $G_0=G-\{h_1,\ldots,h_m\}$.  We claim that $G_0$ is hereditarily $(c-\varepsilon_c,10(g+3))$-sparse.
Hence, we need to prove that every induced subgraph $G'$ of $G_0$ is $(c-\varepsilon_c,10(g+3))$-sparse.
We prove the claim by induction on $|V(G')|$.  If $|V(G')|\le t\le 4$, then the claim is trivial since $10(g+3)\ge 10\ge |E(G')|$.
In particular, we can assume that $|V(G)|>t$, and Lemma~\ref{lemma-nonewspars} implies $G'$ is $(c,4)$-sparsifier-free.
If a vertex $v\in V(G')$ has degree at most $c-1$, then
$|E(G')|\le (c-1)+|E(G'-v)|\le (c-1)+(c-\varepsilon_c)|V(G'-v)|+10(g+3)\le (c-\varepsilon_c)|V(G')|+10(g+3)$
by the induction hypothesis.  On the other hand, if $G'$ has minimum degree at least $c$, then $G'$ is $(c-\varepsilon_c,10(g+3))$-sparse
by Lemma~\ref{lemma-spaspar}.

By Corollary~\ref{cor-colin}, $G_0$ has an $L$-coloring $\varphi$ with clustering at most $s$.  Then $\varphi$ is also
a weak diameter-$(s-1)$ coloring.  By Lemma~\ref{lemma-delspars}, $G$ has a weak diameter-$(s+7)$ $L$-coloring.
\end{proof}

\begin{figure}
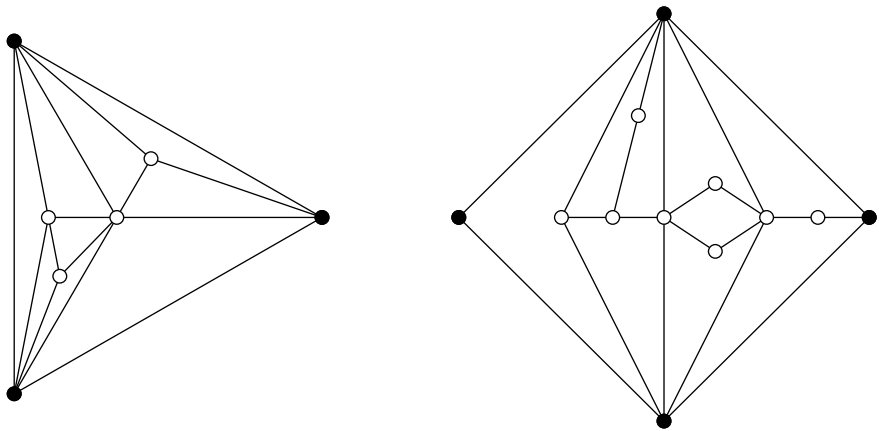

\begin{center}
\begin{asy}
v[0] = (0,0);
for (i = 1; i <= 3; ++i)
  v[i] = v[0] + 3 * dir(120i);
v[4] = (v[0]+v[1]+v[2])/3;
v[5] = (v[0]+v[1]+v[3])/3;
v[6] = (v[0]+v[2]+v[4])/3;

draw(v[1]--v[2]--v[3]--cycle);
draw(v[0]--v[1]);
draw(v[0]--v[2]);
draw(v[0]--v[3]);

draw(v[4]--v[0]);
draw(v[4]--v[1]);
draw(v[4]--v[2]);

draw(v[5]--v[0]);
draw(v[5]--v[1]);
draw(v[5]--v[3]);

draw(v[6]--v[0]);
draw(v[6]--v[2]);
draw(v[6]--v[4]);

for (i = 0; i <= 6; ++i)
  vertex (v[i]);
for (i = 1; i <= 3; ++i)
  vertex (v[i],black);

v[0] = (8,0);
for (i = 1; i <= 4; ++i)
  v[i] = v[0] + 3 * dir(90i);
v[5] = (6.5,0);
v[6] = (9.5,0);
v[7] = (v[0] + v[5]) / 2;
v[8] = (v[7] + v[1]) / 2;
v[9] = (v[6] + v[0]) / 2 + (0,0.5);
v[10] = (v[6] + v[0]) / 2 + (0,-0.5);
v[11] = (v[6] + v[4]) / 2;

draw(v[1]--v[2]--v[3]--v[4]--cycle);
draw(v[1]--v[0]--v[3]);
draw(v[1]--v[5]--v[3]);
draw(v[1]--v[6]--v[3]);
draw(v[0]--v[7]--v[5]);
draw(v[7]--v[8]--v[1]);
draw(v[0]--v[9]--v[6]);
draw(v[0]--v[10]--v[6]);
draw(v[4]--v[11]--v[6]);

for (i = 0; i <= 11; ++i)
  vertex (v[i]);
for (i = 1; i <= 4; ++i)
  vertex (v[i],black);
\end{asy}
\end{center}
\caption{A 3-stack and a 4-stack.}\label{fig-stacks}
\end{figure}

Next, we need to take care of separating $t$-cycles.  These are generally dealt with using standard precoloring arguments,
but the cases where a vertex has $c$ precolored neighbors turn out to be somewhat problematic and require us to
handle the following special case separately.  For a cycle $K$ in a plane graph $G$, let $G_K$ denote the subgraph of $G$ drawn in the closed disk bounded
by $K$.  The \emph{$3$-base} is the plane drawing of $K_4$ and the \emph{$4$-base}
is the plane drawing of $K_{2,3}$.  For $t\in \{3,4\}$, a finite plane graph $G$ is a \emph{$t$-stack} if it is either
a cycle of length $t$, or if there exists a $t$-base $H\subseteq G$ such that the outer face of $G$ is equal to the
outer face of $H$ and for each internal face of $H$ bounded by a $t$-cycle $K$, the graph $G_K$ is a $t$-stack.
See Figure~\ref{fig-stacks} for an example of a $3$-stack and a $4$-stack.  Let $C$ be the cycle bounding the outer
face of a $t$-stack $G$, let $\psi$ be a coloring of $C$ and let $\varphi$ be a coloring of $G$ that extends $\psi$.
We say that $\varphi$ is \emph{$\psi$-opaque} if no monochromatic component of $\varphi$ on $G$
contains vertices belonging to two distinct monochromatic components of $\psi$ on $C$.
The proof of the following lemma is specific to the cases $c\in\{2,3\}$ and is given in Sections~\ref{sec-trfree}
and \ref{sec-3choos}.

\begin{lemma}\label{lemma-stacks}
For $c\in\{2,3\}$, let $t=\tfrac{2c}{c-1}$.  Let $G$ be a $t$-stack and let $L$ be an assignment of lists of size $c$
to vertices of $G$.  Then every $L$-coloring $\psi$ of the cycle $C$ bounding the outer face of $G$
extends to a weak diameter-$4$ $\psi$-opaque $L$-coloring $\varphi$ of $G$.
\end{lemma}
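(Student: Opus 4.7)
I would prove the lemma by induction on $|V(G)|$.  The base case is $G=C$, a $t$-cycle, where $\varphi:=\psi$ trivially works: the weak diameter of any monochromatic component is at most $\lfloor t/2\rfloor\le 2$, and $\psi$-opacity is vacuous.  For the inductive step, fix a $t$-base $H\subseteq G$ with the same outer face as $G$; in both cases ($t=3$ and $t=4$), $H$ contributes a single internal vertex $w$, and the internal $t$-faces of $H$ bound sub-$t$-stacks $G_1,\ldots,G_m$ that meet pairwise only along vertices of $V(H)$.  The plan is to choose $\varphi(w)\in L(w)$ first and then invoke the induction hypothesis on each $G_j$ with the precoloring $\varphi|_{V(K_j)}$, where $K_j$ is the boundary of the $j$-th internal face.

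The choice of $\varphi(w)$ must respect opacity on $H$.  For $t=3$, any two equally colored vertices of the outer triangle lie in the same $\psi$-component of $C$, so opacity on $H$ holds for every choice of $\varphi(w)$.  For $t=4$, the outer cycle $C$ is a $4$-cycle $a_1b_1a_2b_2$ and $w$ is adjacent only to the ``pole'' vertices $a_1,a_2$; the single problematic scenario is $\psi(a_1)=\psi(a_2)=x$ with neither $b_1$ nor $b_2$ colored $x$, since then $\varphi(w)=x$ would unite the $\psi$-components $\{a_1\}$ and $\{a_2\}$.  Because $|L(w)|=c=2$, forbidding this one color still leaves a valid choice.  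Once $\varphi(w)$ is fixed, global opacity of $\varphi$ on $G$ follows from the sub-stack opacity given by induction together with the fact that distinct sub-stacks interact only through $V(H)$, whose colors we have just controlled.

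The main obstacle is proving weak diameter at most $4$, since the crude union bound that combines two weak-diameter-$4$ subcomponents across a $V(H)$-gluing only yields about $4+\operatorname{diam}(H)+4$, far exceeding $4$.  I would therefore strengthen the inductive hypothesis to a ``radius'' statement: each monochromatic component $M$ of the extension $\varphi$ contains a vertex $v^{\ast}$ with $d_G(v^{\ast},u)\le 2$ for every $u\in M$, which yields $\operatorname{diam}_G(M)\le 4$.  To preserve this invariant, $\varphi(w)$ should be chosen so that each sub-stack's monochromatic components that reach $V(K_j)$ do so in a controlled way, e.g., so that the color-$\varphi(w)$ component inside each $G_j$ meets $V(K_j)$ only at $w$; the merged component on $G$ can then be centered at a single vertex of $V(H)$ while still satisfying the radius-$2$ bound.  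Arranging these constraints together with opacity, using the tight budget of $c\in\{2,3\}$ list colors and the specific combinatorial structure of $K_4$ (for $c=3$) versus $K_{2,3}$ (for $c=2$), is the delicate part of the argument and is the content of the case analyses in Sections~\ref{sec-trfree} and \ref{sec-3choos}.
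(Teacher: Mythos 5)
Your outline hits the right skeleton — induction on $|V(G)|$, the need to strengthen the inductive hypothesis beyond the bare weak-diameter bound because a naive union bound blows up across the gluing, and the observation that for $t=3$ opacity on $H$ is automatic while for $t=4$ it forbids at most one color for $w$. For $c=3$ this matches the paper's Lemma~\ref{lemma-stacks3} quite closely (the paper's invariant there is even stronger: the transversal component is contained in the neighborhood of each of two boundary vertices, giving radius $1$ and weak diameter $2$).

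The gap is in the $c=2$ case, and it is not just a matter of deferred bookkeeping: the specific strengthening you propose — that each monochromatic component $M$ contains a vertex $v^{\ast}$ with $d_G(v^{\ast},u)\le 2$ for all $u\in M$ — is not what the paper establishes, and it appears not to be maintainable. In the paper's Lemma~\ref{lemma-stacks2}, the invariant for a $C$-transversal component $Q$ is: $Q$ contains a distinguished vertex $v$ of $C$, $\varphi(v)\neq\varphi(v')$ for the opposite vertex $v'$, and every vertex of $V(Q)\setminus V(C)$ has a neighbor in $V(C)\setminus V(Q)$. This forces every vertex of $Q$ to be within distance $1$ of the boundary $4$-cycle, which has diameter $2$, giving weak diameter $4$ — but it does \emph{not} give radius $2$ from any single vertex of $Q$. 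Indeed $Q$ can simultaneously contain, outside $C$, a vertex whose only boundary neighbor outside $Q$ is $v'$ and another whose only such neighbor is one of the two $b_i$; from $v$ the former is at distance $3$, and no other vertex of $Q$ obviously does better. So your proposed center does not exist in general, and trying to force the $\varphi(w)$-component in each sub-stack to meet its boundary cycle only at $w$ would add constraints that a list of size $2$ cannot in general satisfy alongside opacity.

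Two further structural differences from the paper are worth noting. First, for $t=4$ the paper does not peel off a single $4$-base with one internal vertex; instead it takes \emph{all} common neighbors $x_1,\dots,x_{m+1}$ of the two opposite boundary vertices at once and handles the resulting fan of sub-$4$-stacks in a single step. This fan decomposition is what lets the paper choose, for each sub-stack, \emph{which} of its boundary vertices plays the distinguished role, based on the colors assigned to the $x_i$'s — a degree of freedom that your one-level decomposition does not expose and that the argument genuinely uses (the conditions (C1)--(C2) are propagated by switching the distinguished vertex $v_i$ between $x_i$ and $x_{i+1}$ depending on the color pattern). Second, in both cases the distinguished vertex in the strengthened statement is a vertex of the \emph{boundary} cycle (passed down as a parameter: $u$ in the $3$-stack lemma, the $G$-active $v$ in the $4$-stack lemma), not the new internal vertex $w$; the invariant is anchored to the boundary so it can be read off and re-used at the parent level. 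These are precisely the ideas that your proposal's final sentence waves at but does not supply, and they are the actual content of the lemma.
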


A cycle $C$ in a graph $G$ is \emph{$c$-solitary} if every vertex $v\in V(G)\setminus V(C)$
has fewer than $c$ neighbors in $C$.  Given a coloring $\psi$ of $C$, we say that a coloring
$\varphi$ of $G$ \emph{properly extends} $\psi$ if the restriction of $\varphi$ to $C$
is equal to $\psi$ and $\psi(u)\neq \varphi(v)$ for every $uv\in E(G)$ such that $u\in V(C)$
and $v\not\in V(C)$.  For a graph $G$ drawn on a surface of non-zero Euler genus and a contractible cycle $K$ in $G$,
let $G_K$ denote the subgraph of $G$ drawn in the unique closed disk in the surface bounded by $K$.

\begin{lemma}\label{lemma-ext}
For $c\in\{2,3\}$  and  every surface $\Sigma$,
there exists a positive integer $\ell$ such that the following claim holds.
Let $G$ be a graph of girth at least $t=\tfrac{2c}{c-1}$ drawn on $\Sigma$ without non-contractible cycles of length at most four
and let $L$ be an assignment of lists of size $c$ to vertices of $G$.
Suppose that either $C$ is an empty graph, or $\Sigma$ is the plane and $C$ is a cycle of length $t$ bounding the outer face of $G$,
and let $\psi$ be an $L$-coloring of $C$.  If $C$ is $c$-solitary, then $\psi$ properly extends to a weak diameter-$\ell$ $L$-coloring of $G$.
\end{lemma}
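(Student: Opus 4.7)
The plan is to prove Lemma~\ref{lemma-ext} by induction on $|V(G)|$ for each fixed $\Sigma$, letting $\ell = \ell(\Sigma)$ be a suitably large constant. The base case $V(G) = V(C)$ is immediate via $\varphi := \psi$. In the inductive step I distinguish whether $G$ contains a separating $t$-cycle distinct from $C$.

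Suppose such a separating $t$-cycle exists; choose one $K \ne C$ minimising $|V(G_K)|$, where $K$ is contractible by the hypothesis that $\Sigma$ has no non-contractible cycles of length at most four. If $K$ is $c$-solitary in $G$, I apply the inductive hypothesis twice: first to $G_1 := G - (V(G_K) \setminus V(K))$ (keeping $C$ on $\Sigma$) to obtain $\varphi'$ properly extending $\psi$; then to $G_K$ viewed as a plane graph with outer face $C := K$ (which remains $c$-solitary in $G_K$) precoloured by $\psi_K := \varphi'|_K$, obtaining $\varphi_K$. Because $\varphi_K$ properly extends $\psi_K$, no monochromatic edge crosses $K$ from the $G_K$-side, so every monochromatic component of the combined $\varphi$ lies wholly in $V(G_K)$ or wholly in $V(G_1)$, bounding the weak diameter by the maximum of the two inductive bounds. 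If instead $K$ is not $c$-solitary, some $v \in V(G_K) \setminus V(K)$ has at least $c$ neighbours on $K$; the girth condition forces these to be an opposite pair (for $c = 2$, $t = 4$) or all of $V(K)$ (for $c = 3$, $t = 3$), so $V(K) \cup \{v\}$ induces a $t$-base ($K_{2,3}$ or $K_4$, respectively). Minimality of $|V(G_K)|$ prevents any further interior vertices inside the internal $t$-cycles of this base, so $G_K$ equals this $t$-base and is hence a $t$-stack. Apply induction to $G_1$ to obtain $\varphi'$, then Lemma~\ref{lemma-stacks} to $G_K$ with $\psi_K := \varphi'|_K$ to produce a $\psi_K$-opaque weak diameter-$4$ extension; opacity confines each monochromatic component of $\varphi$ touching $V(G_K) \setminus V(K)$ to an extension of a single $\varphi'$-component by a weak diameter-$\le 4$ subcomponent, so the weak diameter grows by only an additive constant.

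If $G$ has no separating $t$-cycle distinct from $C$ and $C = \emptyset$, Corollary~\ref{cor-nocut} finishes immediately. The remaining subcase $C \ne \emptyset$ (so $\Sigma$ is the plane) is the most delicate: I adapt the sparsifier--island argument behind Corollary~\ref{cor-nocut} to respect the precoloring $\psi$. Choose a maximal system of pairwise-independent $c$-sparsifier appearances of size $\le 4$ in $G$ whose images are disjoint from $V(C)$; an analogue of Lemma~\ref{lemma-nonewspars}, adapted to handle separately the bounded number of sparsifier appearances that could meet the $t$-vertex set $V(C)$, shows that the residual graph $G_0$ is hereditarily $(c - \varepsilon_c, O(g))$-sparse. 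Colour $G_0$ iteratively by extracting small $c$-islands of $G_0$ that are disjoint from $V(C)$, obtained by applying Lemma~\ref{lemma-islands} to the induced subgraph on $V(G_0) \setminus V(C)$: the $c$-solitary hypothesis on $C$ ensures that each island vertex has fewer than $c$ already-fixed colours forbidden by its outside $G_0$-neighbours (including any $V(C)$-neighbours coloured by $\psi$), so $L(v)$ still offers a valid choice. Finally, Lemma~\ref{lemma-delspars} extends the resulting coloring of $G_0$ to all of $G$ with only an additive $O(1)$ increase in weak diameter.

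The principal obstacle is this last subcase: threading together the sparsifier extraction, the island-based colouring, and the precoloring $\psi$ into a single coherent argument. The $c$-solitary hypothesis on $C$ is indispensable -- without it the list restricted by $\psi$ at a vertex adjacent to $V(C)$ could become empty -- and the island selection must avoid $V(C)$ so that the usual ``fewer than $c$ outside colours'' accounting absorbs both the colours forbidden by ordinary outside neighbours and those forbidden by precoloured $V(C)$-neighbours. A secondary bookkeeping subtlety arises in the non-$c$-solitary subcase, where opacity must be used to ensure that the additive ``$+4$'' contribution of stacks does not accumulate across the recursion depth, so that the final weak-diameter bound remains uniform in $|V(G)|$.
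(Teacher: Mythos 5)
Your overall framework --- induction on $|V(G)|$, splitting on separating $t$-cycles, proper extension when the cycle is $c$-solitary, stacks/sparsifiers otherwise --- is the right one, and your treatment of the $c$-solitary subcase matches the paper. But two of your subcases have genuine gaps.

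\textbf{The non-$c$-solitary subcase does not close the induction.} You peel off one $t$-base $G_K$, apply the inductive hypothesis to $G_1 = G - (V(G_K)\setminus V(K))$ to get weak diameter at most $\ell$, then glue back $G_K$ via Lemma~\ref{lemma-stacks}. Opacity indeed ensures each monochromatic component spanning $K$ attaches to a \emph{single} $\varphi'$-component, but attaching a weak-diameter-$4$ piece to a weak-diameter-$\ell$ piece yields weak diameter up to roughly $\ell+6$, not $\ell$. Since this subcase is itself recursive (a long chain of nested $K_4$'s, say, triggers it $\Omega(n)$ times, each removing one vertex), the additive constant accumulates across the recursion and the bound is not uniform in $|V(G)|$. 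You flag this as a ``bookkeeping subtlety'' resolved by opacity, but opacity only controls \emph{merging}, not \emph{growth}; there is no strengthened inductive invariant in your proposal that would stop the accumulation. The paper avoids this by making the stack case a \emph{base case}, not a recursive one: once no separating $t$-cycle is $c$-solitary in its interior, \emph{every} separating $t$-cycle bounds a $t$-stack, so one deletes all inclusionwise-maximal stack interiors at once, colors the residual skeleton in a single shot via Corollary~\ref{cor-nocut}, and then fills in all stacks simultaneously via Lemma~\ref{lemma-stacks}, incurring the $+O(1)$ penalty exactly once. (A related smaller slip: your case split should be ``$K$ is $c$-solitary \emph{in $G_K$}''; as written, ``not $c$-solitary in $G$'' does not guarantee the offending vertex lies inside $K$.)

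\textbf{The island extraction does not cope with the precoloring.} In the final subcase you extract $c$-islands of $G_0-V(C)$ and claim the $c$-solitary hypothesis keeps the number of forbidden colors below $c$. It does not: a vertex $v$ in such an island may have up to $c-1$ already-colored neighbors in $(G_0-V(C))\setminus I$ \emph{and} up to $c-1$ neighbors on $C$, for a total of up to $2c-2\ge c$ forbidden colors, so $L(v)$ may be exhausted. The paper sidesteps this entirely: it applies Corollary~\ref{cor-nocut} to $G'-V(C)$ with \emph{no} reference to $\psi$, and then, in a separate last step, uses $c$-solitarity to \emph{overwrite} the colors of vertices adjacent to $C$ with a color avoiding their $C$-neighbors, paying a final additive $O(1)$ in weak diameter. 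Trying to fold $\psi$ into the island extraction forces you to budget for two independent sources of forbidden colors at once, which the $c$-island definition cannot absorb.

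In short: the high-level plan (split on a $c$-solitary separating $t$-cycle; otherwise handle stacks and then $C$) matches the paper, but making the stack case recursive rather than a one-shot base case breaks the uniformity of the weak-diameter bound, and the precoloring should be decoupled from the island/sparsifier machinery rather than threaded through it.
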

\begin{proof}
Let $\Sigma_0$ be the sphere.  By Corollary~\ref{cor-nocut}, there exists $\ell'$ such that every graph from $\GG_{c,\Sigma}\cup \GG_{c,\Sigma_0}$
has a weak diameter-$\ell'$ coloring from any assignment of lists of size $c$.  Let $\ell=2\ell'+22$.

We prove the claim by induction on the number of vertices of $G$.
Suppose first that there exists a separating $t$-cycle $K$ in $G$ (necessarily contractible, since $t\le 4$) such that $K$ is $c$-solitary in $G_K$.
Let $G_1$ be the graph obtained from $G$ by deleting the vertices and edges drawn in the open disk bounded by $K$.
By the induction hypothesis, $\psi$ properly extends to a weak diameter-$\ell$ $L$-coloring $\varphi_1$ of $G_1$.
Using the induction hypothesis again, the restriction of $\varphi_1$ to $K$ properly extends to
a weak diameter-$\ell$ $L$-coloring $\varphi_2$ of $G_K$ ($G_K$ is drawn in the plane rather than in $\Sigma$ when
$\Sigma$ has positive genus, but this is not a problem, as we included the genus-0 case in the choice of $\ell'$).  Since $\varphi_2$ properly extends the restriction of $\varphi_1$,
each monochromatic component of $G$ in the $L$-coloring $\varphi_1\cup \varphi_2$ is contained in $G_1$ or $G_K$,
and thus has weak diameter at most $\ell$.

Hence, we can assume there is no such separating $t$-cycle.  Observe that this implies that for each separating $t$-cycle, the graph $G_K$
is a $t$-stack.  Let $K_1$, \ldots, $K_m$ be separating $t$-cycles in $G$ such that the open disks bounded by them are inclusionwise-maximal,
and observe that these open disks are disjoint.  Let $G'$ be the graph obtained from $G$ by deleting vertices and edges
drawn in these disks.  Then $G'$ has no separating $t$-cycles, and thus $G'\in \GG_{c,\Sigma}$.
By Corollary~\ref{cor-nocut}, $G'-V(C)$ has a weak diameter-$\ell'$ $L$-coloring $\varphi'$.
For $i\in\{1,\ldots,m\}$, Lemma~\ref{lemma-stacks} implies the restriction $\psi_i$ of $\varphi'\cup \psi$ to $K_i$
extends to a weak diameter-$4$ $\psi_i$-opaque $L$-coloring $\varphi_i$ of $G_{K_i}$.
Then $\varphi''=\varphi'\cup \varphi_1\cup\ldots\cup \varphi_m$ is a weak diameter-$(\ell'+8)$ $L$-coloring of $G-V(C)$.

Let $\varphi$ be the $L$-coloring that matches $\psi$ on $C$, $\varphi(v)\in L(v)$ is chosen as an arbitrary color different
from the colors of the neighbors of $v$ in $C$ for every vertex $v\in V(G)\setminus V(C)$
with at least one neighbor in $C$ (this is possible, since $C$ is $c$-solitary), and $\varphi(v)=\varphi''(v)$ for each vertex $v$
at distance at least two from $C$.
Every monochromatic component in $\varphi$ not contained in $C$ is obtained from a disjoint union of connected monochromatic subgraphs in $\varphi''$
by adding neighbors of vertices of $C$, and thus the distance between any two vertices of the resulting monochromatic component
is at most $2(\ell'+8)+6=\ell$.
\end{proof}

To finish the proof, we need to deal with non-contractible cycles of length at most 4.

\begin{proof}[Proof of Theorem~\ref{thm-main}]
For $c\in\{2,3\}$, let $t=\tfrac{2c}{c-1}$.  For a non-negative integer $g$, let $\ell'_g$ be the
maximum of the constants $\ell$ from Lemma~\ref{lemma-ext} over all surfaces of Euler genus at most $g$,
and let us define $\ell_0=\ell'_0$ and $\ell_g=\max(\ell'_g,2\ell_{g-1}+4)$.
We prove by induction on $g$ that any graph $G$ of girth at least $t$ drawn on a surface of Euler genus at
most $g$ has a weak diameter-$\ell_g$ $L$-coloring from any assignment $L$ of lists of size $c$.

If $g=0$, then $G$ does not contain any non-contractible cycles, and thus the claim follows from Lemma~\ref{lemma-ext}
(with $C$ being an empty graph, and considering the drawing of $G$ in the plane instead of on the sphere).
Hence, suppose that $g>0$.  If $G$ does not contain any non-contractible cycle of length at most $4$, then
the claim again follows from Lemma~\ref{lemma-ext}.  Hence, suppose $K$ is a non-contractible cycle of length at most $4$ in $G$.
Then each component of the graph $G-V(K)$ can be drawn on a surface of Euler genus at most $g-1$, and by the induction
hypothesis, $G-V(K)$ has a weak diameter-$\ell_{g-1}$ $L$-coloring.  We extend this $L$-coloring to $G$ by choosing the colors
of vertices of $K$ from their lists arbitrarily; each monochromatic component of the resulting $L$-coloring has
weak diameter at most $2\ell_{g-1}+4\le \ell_g$, as required.
\end{proof}

\section{The triangle-free case}\label{sec-trfree}

Let us now provide the proofs of Lemmas~\ref{lemma-spaspar} and~\ref{lemma-stacks} in the case $c=2$.
Let $S_1$ be a single vertex and $\gamma_1$ the function assigning to this vertex the value $3$,
and let $S_2$ be the 4-cycle and $\gamma_2$ the function assigning to all its vertices the value $4$.

\begin{lemma}\label{lemma-spars}
Both $(S_1,\gamma_1)$ and $(S_2,\gamma_2)$ are $2$-sparsifiers of size at most 4.
\end{lemma}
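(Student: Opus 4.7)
The plan for $(S_1, \gamma_1)$ is immediate: since $\gamma_1(v) - \deg v = 3$ we have $|B(v)| \le 3$, and as $|L(v)| = 2$, at most one color in $L(v)$ can appear in $B(v)$ with multiplicity $\ge 2$ (otherwise $|B(v)| \ge 4$). Coloring $v$ with any other color in $L(v)$ yields a $B$-opaque $L$-coloring, since the singleton component of $v$ contains at most one vertex whose color appears in $B$, and when it does, with multiplicity exactly $1$.

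For $(S_2, \gamma_2)$, write the $4$-cycle as $v_1v_2v_3v_4$. Since $|B(v_i)| \le \gamma_2(v_i) - \deg v_i = 2$, at most one color $f(v_i)$ can appear with multiplicity $2$ in $B(v_i)$; define $L'(v_i) := L(v_i) \setminus \{f(v_i)\}$, so $|L'(v_i)| \in \{1,2\}$. Any $L'$-coloring $\varphi$ then has the property that $\varphi(v_i)$ appears in $B(v_i)$ with multiplicity at most $1$, so $B$-opacity reduces to the condition that every monochromatic component contain at most one ``$B$-bad'' vertex $v$, meaning a vertex in which the component's color appears in $B(v)$. The crucial structural observation is: if $L'(v_i) = \{\alpha\}$ then $L(v_i) = \{\alpha, f(v_i)\}$ and $B(v_i) = \{f(v_i), f(v_i)\}$, so $\alpha \notin B(v_i)$; that is, a forced vertex contributes no $B$-bad vertex in its forced color.

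The strategy is to produce a proper $L'$-coloring of $C_4$: each monochromatic component is then a singleton, and $B$-opacity follows automatically. When every $|L'(v_i)| = 2$, such a proper coloring exists by the $2$-choosability of $C_4$ (Erd\H{o}s--Rubin--Taylor). Otherwise, constraints from forced vertices propagate around the cycle, and one completes the argument via a short case analysis over the positions and coincidences of forced colors (up to the dihedral symmetry of $C_4$). In the configurations where a proper $L'$-coloring fails---namely, two adjacent forced vertices sharing a color, or a vertex whose $L'$-list is exactly the pair of distinct colors forced on its two neighbors---one exhibits a specific non-proper $L'$-coloring whose monochromatic components each contain at most one $B$-bad vertex, using the structural observation to rule out forced endpoints of the component as $B$-bad. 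The main obstacle is the bookkeeping in this case enumeration; each individual verification is elementary.
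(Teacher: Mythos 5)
Your proof for $(S_1,\gamma_1)$ is correct and matches the paper's. Your setup for $(S_2,\gamma_2)$ also parallels the paper's: you form $L'$ by deleting the unique doubled color $f(v_i)$ from $L(v_i)$, and you correctly observe that any $L'$-coloring avoids the multiplicity-two condition and that a vertex with $|L'(v_i)|=1$ can never be $B$-bad in its forced color. (Incidentally, the observation holds more generally: \emph{every} vertex with $f(v_i)$ defined satisfies $\varphi(v_i)\notin B(v_i)$, not only those with a singleton $L'$-list, since $|B(v_i)|\le 2$ forces $B(v_i)=\{f(v_i),f(v_i)\}$.)

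However, the rest of your $(S_2,\gamma_2)$ argument is a genuine gap. You propose to find a proper $L'$-coloring whenever possible and handle the remaining cases by an enumeration, but your enumeration of the failure cases is not exhaustive. For example, take $L'(v_1)=\{\alpha\}$, $L'(v_2)=\{\alpha,\gamma\}$, $L'(v_3)=\{\gamma,\beta\}$, $L'(v_4)=\{\alpha,\beta\}$ on the cycle $v_1v_2v_3v_4$: only $v_1$ is forced, so neither of your two listed failure configurations occurs, yet no proper $L'$-coloring exists ($v_1=\alpha$ forces $v_2=\gamma$ and $v_4=\beta$, stranding $v_3$). So the case analysis, which you already flag as the ``main obstacle,'' is both unfinished and built on an incomplete catalog of cases; as written the argument does not go through.

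The paper sidesteps the case analysis entirely with an orientation trick, which is the idea your proposal is missing. Let $R$ be the set of vertices whose $B$-list has a doubled color (regardless of whether $|L'(v)|$ is $1$ or $2$). Orient the $4$-cycle, delete from $S_2$ the outgoing edge of each $v\in R$, and properly $L'$-color the resulting graph $S'$ (this is always possible: either $S'$ is the whole even cycle, or it is a union of paths whose initial vertices have a list strictly larger than their $S'$-degree). Opacity then follows without casework: if $\varphi(v_1)=\varphi(v_2)=a\in B(v_1)\cap B(v_2)$, then $v_1,v_2\notin R$, so their successors in the orientation carry a different color; hence $v_1,v_2$ are non-adjacent and separated by differently-colored vertices on both arcs of the cycle. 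Adopting this device would replace your deferred enumeration with a uniform two-line argument.
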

\begin{proof}
Consider $i\in \{1,2\}$, let $L$ be an assignment of lists of size $2$ to vertices of $S_i$, and let $B$
be a multiassigment of a list of size $3$ in case $i=1$ and of lists of size $2$ in case $i=2$.
In the case $i=1$, choose $\varphi(v)\in L(v)$ to be different from the color that appears in $B(v)$ twice (if any).
Clearly, $\varphi$ is $B$-opaque.

In the case $i=2$, we choose the $L$-coloring of the 4-cycle $S_2$ as follows.
Let $R$ be the set of vertices $v\in V(S_2)$ such that $B(v)$ contains some color $a$ with multiplicity two;
for such a vertex, set $L'(v)=L(v)\setminus\{a\}$.  For any vertex $v\in V(S_2)\setminus R$, set $L'(v)=L(v)$.
Orient the cycle $S_2$ arbitrarily, and let $S'$ be the graph obtained from $S_2$ by, for each vertex $v\in R$,
deleting the edge that follows it in $S_2$ in this orientation.  Note that $|L'(v)|\ge \deg_{S'} v$ for each
$v\in V(S_2)$, and that either $S'$ is a $4$-cycle, or the first vertex $u$ of each component of $S'$ according
to the orientation of the 4-cycle satisfies $|L'(u)|>\deg_{S'} u$.  Consequently, $S'$ has a proper $L'$-coloring
$\varphi$.  We claim that $\varphi$ is $B$-opaque.  Indeed, consider distinct vertices $v_1,v_2\in V(S_2)$ such that
$\varphi(v_1)=\varphi(v_2)=a\in B(v_1)\cap B(v_2)$.  Clearly, $v_1,v_2\in V(S_2)\setminus R$, and thus
the vertices following $v_1$ and $v_2$ in $S_2$ have colors different from $a$.  Hence, $v_1$ and $v_2$ are not
in the same monochromatic component.
\end{proof}

\begin{proof}[Proof of Lemma~\ref{lemma-spaspar} in the case $c=2$]
Let $\varepsilon_2=1/3000$.  We can assume $|V(G)|>5$, as otherwise the claim holds trivially.
Since $G$ is triangle-free and has minimum degree at least two, every face of $G$ has length at least four.
Let $\beta=\sum_{f\in F} (|f|-4)$, where the sum is over all faces.  By the generalized Euler's formula,
we have $|E(G)|\le |V(G)|+|F|+g-2$, and since $2|E(G)|=\sum_{f\in F} |f|=4|F|+\beta$,
we conclude that
$$|E(G)|<2|V(G)|-\frac{\beta}{2}+2g.$$
Since $G$ is simple, does not have separating or non-contractible 4-cycles, and $|V(G)|>5$, every vertex of degree two is incident
with a face of length at least five.  Since $G$ is $(2,4)$-sparsifier-free, $(S_1,\gamma_1)$ has no appearance in $G$,
and thus each vertex of degree three is also incident with a face of length at least five.
Hence, the number $n_3$ of vertices of degree at most three is at most $5\beta$.

Let us give each vertex of degree at most three the charge $1$, any vertex of degree $d\ge 4$ charge $d-4$,
and any face $f$ the charge $|f|-4$.  By the generalized Euler's formula, the sum of charges is
at most
\begin{align*}
\Bigl(\sum_{v\in V(G)} (\deg v -4)\Bigr)+3n_3+\sum_{f\in F} (|f|-4)&=4(|E(G)|-|V(G)|-|F|)+3n_3\\
&\le 4g-8+3n_3<15\beta+4g.
\end{align*}
Each vertex $v$ of degree $d\neq 4$ now sends $1/11$ to each 
adjacent vertex and each vertex opposite to $v$ over a $4$-face; this still leaves $v$ with at least
$$\max(d-4,1)-2d/11\ge \max(9d/11-4,1-2d/11)\ge 1/11$$
units of charge.  Each face $|f|$ of length at least five sends $1/11$ to each incident vertex,
still keeping $|f|-4-|f|/11>0$ units of charge.
Afterwards, each vertex $v$ of degree four which received charge sends $1/99$ to each adjacent vertex and each vertex opposite to $v$
over a 4-face; note that $v$ keeps at least $1/11-8/99=1/99$ units of charge.
Since each face has non-negative final charge and the total amount of charge did not change, we conclude that the sum of
the final charges of vertices is less than $15\beta+4g$.  Note that each vertex has non-negative final charge, and vertices of
degree other than four have final charge at least $1/11$.

We claim that vertices of degree four have charge at least $1/99$.  Consider for a contradiction a vertex $v$
of degree four with smaller final charge.  All incident faces must be 4-faces, only incident
with vertices of degree four, and the faces incident those must also have length
four.  This is not possible, since $(S_2,\gamma_2)$ does not have an appearance in $G$.

Since every vertex has final charge at least $1/99$, we have $|V(G)|/99\le 15\beta+4g$,
and thus $\beta\ge |V(G)|/1500-g$.
Consequently, $|E(G)|<2|V(G)|-\beta/2+2g<(2-1/3000)|V(G)|+3g$.
\end{proof}

We finish this section by proving a strengthening Lemma~\ref{lemma-stacks}  for $c=2$. Introducing this strengthening requires the following additional definitions.
Let $C$ be the 4-cycle bounding the outer face of a 4-stack $G$, and  let $\varphi$ be a coloring of $G$. We say that a monochromatic component $Q$ of $\varphi$ is 
\emph{$C$-transversal} if $V(Q) \cap V(C) \neq \emptyset$ and $V(Q)\setminus V(C) 
\neq 
\emptyset$. We say that $\varphi$ is \emph{$v$-compliant} for some $v \in V(C)$ if either  no monochromatic component of $\varphi$ is $C$-transversal, or there exists a unique such component $Q$ and the following conditions hold
\begin{description}
	\item[(C1)] $v \in V(Q)$,
	\item[(C2)] $\varphi(v) \neq \varphi(v')$, where $v'$ is the unique non-neighbor of $v$ on $C$,
	\item[(C3)] every vertex in $V(Q)\setminus V(C)$ has a neighbor in $V(C)\setminus V(Q)$.
\end{description}
We say that a vertex $v \in V(C)$ is \emph{$G$-active} if every vertex in $V(G) \setminus V(C)$ with two neighbors on $C$ is adjacent to $v$.

\begin{lemma}\label{lemma-stacks2}
Let $G$ be a 4-stack with the outer face bounded by a 4-cycle $C$, let $L$ be an assignment of lists of size two
to vertices of $G$, let $\psi$ be an $L$-coloring of $C$, and let $v \in V(C)$ be $G$-active.
Then $\psi$ extends to a weak diameter-$4$ $\psi$-opaque $v$-compliant $L$-coloring $\varphi$
of $G$.
\end{lemma}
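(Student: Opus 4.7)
The plan is to proceed by induction on $|V(G)|$. If $G = C$, we set $\varphi = \psi$: monochromatic components lie within the $4$-cycle $C$ (so weak diameter is at most~$2$), there are no $C$-transversal components (so $v$-compliance holds vacuously), and $\psi$-opacity is immediate. Otherwise, $G$ possesses a $K_{2,3}$-base $H$ whose outer face is bounded by $C$. Let $a,b$ be the degree-$3$ vertices of $H$ (a pair of non-adjacent vertices of $C$), $y$ the interior degree-$2$ vertex, and write $C = a\text{-}x\text{-}b\text{-}z\text{-}a$; the two internal $4$-faces of $H$ are then bounded by the cycles $K_1 = a\text{-}x\text{-}b\text{-}y\text{-}a$ and $K_2 = a\text{-}z\text{-}b\text{-}y\text{-}a$, and the sub-stacks $G_i := G_{K_i}$ are smaller $4$-stacks. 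Since $y \in V(G) \setminus V(C)$ has its two $V(C)$-neighbors equal to $a,b$, the $G$-activeness of $v$ forces $v \in \{a,b\}$, and by symmetry we take $v = a$.

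The idea is to pick a color $\gamma \in L(y)$ for $y$, a $G_i$-active vertex $v_i \in V(K_i)$ for each~$i$, apply the inductive hypothesis to each $(G_i, \psi_i, v_i)$ with $\psi_i := \psi|_{V(K_i) \cap V(C)} \cup \{y \mapsto \gamma\}$ to obtain weak diameter-$4$, $\psi_i$-opaque, $v_i$-compliant $L$-colorings $\varphi_i$ of $G_i$, and then set $\varphi := \varphi_1 \cup \varphi_2$. Two structural observations guide the choices. First, the set of $G_i$-active vertices on $K_i$ forms a pair of opposite vertices of $K_i$, determined by the outermost $K_{2,3}$-base of $G_i$: for $i=1$ this pair is either the ``outer'' anchor $\{a,b\}$ or the ``inner'' anchor $\{x,y\}$, and analogously for $i=2$. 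Second, by the contrapositive of~(C2), if $\psi_i(v_i) = \psi_i(v_i')$ (where $v_i'$ is the vertex non-adjacent to $v_i$ on $K_i$), then $\varphi_i$ has no $K_i$-transversal component. The case analysis then splits on whether $\psi(a) = \psi(b)$: if yes, we pick $\gamma \in L(y) \setminus \{\psi(a)\}$, take $v_i = a$ whenever the anchor of $G_i$ is $\{a,b\}$ (killing the $K_i$-transversal via~(C2)), and $v_i = y$ otherwise (so any surviving $K_i$-transversal meets $V(K_i)$ only at $y \notin V(C)$, contributing no $C$-transversal to $\varphi$); if no, we pick $\gamma \in L(y) \setminus \{\psi(b)\}$ so that $y$ is not merged with $b$ through $H$, and choose the $v_i$'s to permit at most one $C$-transversal component $Q$ of $\varphi$, which will then contain $a$ (satisfying~(C1)) and not $b$ (satisfying~(C2)).

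Verification in each case proceeds along three lines. For \emph{weak diameter $4$ in $G$}: non-transversal components lie either entirely within some $V(G_i)$ (weak diameter $\le 4$ by induction, and $G_i \subseteq G$) or entirely within $V(K_i)$ (diameter $\le 3$); for any $C$-transversal $Q$, condition~(C3) places every interior vertex of $Q$ within distance~$1$ of $V(C) \setminus V(Q)$, and since $V(C)$ is a $4$-cycle of diameter~$2$, any two vertices of $Q$ lie at distance at most $1 + 2 + 1 = 4$ in $G$. For \emph{$\psi$-opacity}: identifications produced by gluing $\varphi_1$ and $\varphi_2$ occur only through $\{a,b,y\}$ and are controlled by the chosen value of $\gamma$, while the $\psi_i$-opacity of each $\varphi_i$ handles the interiors. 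For \emph{$v$-compliance}: (C1) and (C2) follow by construction, and (C3) is inherited from the recursive~(C3) on each $\varphi_i$, together with the extra check that the neighbor on $V(K_i) \setminus V(Q^{(i)})$ supplied by the inductive hypothesis lies in $V(C) \setminus V(Q)$ rather than at $y$ or at a vertex of $V(Q) \cap V(C)$. The principal obstacle is this last check, and more generally the sub-case where both sub-stacks present the inner anchor: there, simultaneously forcing $\psi_i(v_i) = \psi_i(v_i')$ on both sides would require $\gamma = \psi(x) = \psi(z)$, which may conflict with the earlier constraint on $\gamma$, so the argument must resolve the tension by an explicit choice of $(\gamma, v_1, v_2)$ tailored to each combination of anchor types and colour pattern; here the fact that $|L(y)| = 2$ is just tight enough to thread all cases.
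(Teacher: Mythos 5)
The proposal takes a genuinely different decomposition from the paper, but it leaves a real gap at exactly the point you acknowledge to be ``the principal obstacle,'' so it does not yet constitute a proof.

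The paper decomposes $G$ not by a single $K_{2,3}$-base, but by the full ``fan'' of \emph{all} common neighbors $X=\{x_1,\ldots,x_{m+1}\}$ of $v$ and its non-neighbor $v'$, splitting $G$ into pieces $G_i$ bounded by $C_i=x_ivx_{i+1}v'$. The crucial structural payoff is that each $G_i$ contains no interior common neighbor of $v$ and $v'$, so its $G_i$-active vertices are forced to be the ``inner'' pair $\{x_i,x_{i+1}\}$. This lets the paper choose $v_i\in\{x_i,x_{i+1}\}$ and, via (C2), kill all transversals except possibly in the two boundary pieces $G_1,G_m$, where the non-$v_i$ vertices of $C_i$ all lie on $C$ and (C3) lifts cleanly. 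Your binary split at a single interior vertex $y$ does not force the anchor: $G_1$ may still have its outermost base anchored at $\{a,b\}$, in which case you must take $v_1\in\{a,b\}$, and the inductive (C3) only escapes into $V(K_1)\setminus V(Q_1)$, which may be exactly $\{y\}$ --- a vertex not on $C$. Concretely, if $\psi(a)\neq\psi(b)$ but $\psi(a)\notin L(y)$, so that $\gamma\neq\psi(a),\psi(b)$, then $y\notin Q_1$ and a deep vertex $w$ of the transversal $Q_1$ of $\varphi_1$ whose only $K_1$-neighbor is $y$ has no escape into $V(C)\setminus V(Q)$; the bare inductive hypothesis gives you no control over whether such a $\varphi_1$ is the one returned. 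Nothing in your write-up rules this out, and the concluding sentence (``\,$|L(y)|=2$ is just tight enough to thread all cases\,'') is asserted rather than argued. To rescue a binary induction one would need either a strengthened inductive hypothesis constraining where the transversal escapes, or --- as the paper does --- a decomposition that eliminates the $\{a,b\}$ anchor altogether; as written, the proposal is incomplete.
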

\begin{proof}
We prove the lemma by induction on $|V(G)|$.  The basic case $G=C$ is trivial.  Hence, we can assume $G\neq C$.
Let $v'$ is the unique vertex of $C$ non-adjacent to $v$, and let $X =
\{x_1, \ldots, x_{m+1}\}$ be the set of all common neighbors of $v$ and $v'$ in $G$, numbered so that for  every $i \in \{1,\ldots,m\}$ the cycle  $C_i=x_{i}vx_{i+1}v'$ does not contain any vertices of $X$ in its interior. In particular, we have $x_1, x_{m+1} \in V(C)$. Let $G_i=G_{C_i}$, and note that $x_i$ and $x_{i+1}$ are $G_i$-active.

For $i=2,\ldots,m$, choose a color $\psi(x_i)\in L(x_i) \setminus \{\psi(v')\}$. Let $v_1=x_2$ and $v_{m}=x_{m}$.
For $i=2,\ldots,m-1$, let $v_i=x_i$ if $\psi(x_{i+1})=\psi(v)$ and $v_i=x_{i+1}$ otherwise; note that
$\psi(v_i)\neq \psi(v)$ unless $\psi(x_i)=\psi(v)=\psi(x_{i+1})$.
By the induction hypothesis, the restriction of $\psi$ to $V(C_i)$ extends to a weak diameter-$4$ $\psi$-opaque $v_i$-compliant $L$-coloring $\varphi_i$ of $G_i$ for every $i\in \{1,\ldots, m\}$. Let $\varphi$ be the $L$-coloring of $G$ such that $\varphi_i$ is the restriction of $\varphi$ to $G_i$ for every $i$. We show that $\varphi$  satisfies the lemma.

Note that every $v$-compliant coloring of $G$ that extends $\psi$ is necessarily $\psi$-opaque. Thus it suffices to show that  every  monochromatic component  $Q$ of $\varphi$ has weak diameter at most four, and that if $Q$ is $C$-transversal then $Q$ satisfies the conditions (C1)--(C3) above.

Suppose first that $V(Q) \cap V(C) = \emptyset$. If $Q$ is a monochromatic component of $G_i$ for some $i$ then  the weak diameter of $Q$ is at most four by the choice of $\varphi_i$. Thus we may assume that $Q$ contains vertices in both  $V(G_i)\setminus V(G_{i-1})$ and $V(G_{i-1})\setminus V(G_i)$ for some $i \in \{2,\ldots,m\}$. Since $V(Q)\cap V(C)=\emptyset$, it follows that $x_i\in V(C)$ and for $j \in 
\{i-1,i\}$ the restriction of $Q$ to $G_j$ is $C_j$-transversal.  Since $\varphi_{i-1}$ and $\varphi_i$ are $\psi$-opaque, $V(Q)\cap V(C_{i-1}\cup C_i)=\{x_i\}$ and $V(Q)\subseteq V(G_{i-1}\cup G_i)$.
By (C3), every vertex of $Q$ is a neighbor of some vertex in $\{v,v',x_{i-1},x_{i+1}\}$, and thus the weak diameter of $Q$ is at most four as desired.

It remains to consider the case when $Q$ is $C$-transversal.  Consider any edge $uw \in E(G)$
such that $u \in V(C)$, $w \not \in V(C)$ and $\varphi(u)=\varphi(w)$:
\begin{itemize}
\item If $w=x_i$ for some $i\in\{2,\ldots,m\}$, then since $\psi(x_i)\neq \psi(v')$, we have
$u=v$ and $\psi(v) \neq  \psi(v')$.
\item Otherwise, $w\in V(G_i)\setminus V(C_i)$ for some $i\in \{1,\ldots, m\}$.
By (C1) for $\varphi_i$, we have $\psi(v_i)=\psi(u)$, and the choice of $v_1$ and $v_m$ and the
property (C2) of $\varphi_i$ imply $u\not\in \{x_1,x_{m+1}\}$.  Since $\psi(v_i)\in L(v_i)\setminus \{\psi(v')\}$, 
it follows that $u=v$ and $\psi(v)\neq \psi(v')$.
\end{itemize}
In either case, we conclude that $\varphi$ satisfies (C1) and (C2).
It remains to check that (C3) holds, i.e. every vertex $w \in V(Q)\setminus V(C)$ has a neighbor in $V(C)\setminus V(Q)$.
If $w \in X$ then $v'$ is such a neighbor. Hence, assume that  $w \in V(G_i) \setminus V(C_i)$ for some $i$, and thus
the restriction of $Q$ to $G_i$ is $C_i$-transversal.  Recall that $v\in V(Q)$; by (C1) for $\varphi_i$,
we have $v_i\in V(Q)$, and thus $\psi(v_i)=\psi(v)$.  Moreover, (C2) for $\varphi_i$ implies $\psi(x_i)\neq \psi(x_{i+1})$.
It follows from the choice of $v_i$ that $i \in \{1,m\}$, and thus $V(C_i)\setminus V(Q)\subseteq V(C_i)\setminus \{v_i\}\subseteq V(C)$. By (C3) for $\varphi_i$,
the vertex $w$ has a neighbor in $V(C_i)\setminus V(Q) \subseteq V(C)\setminus V(Q)$, as desired.    
 \end{proof}

\section{The non-triangle-free case}\label{sec-3choos}

Next, let us consider the case $c=3$.
Let $S_1$ be a single vertex and $\gamma_1$ the function assigning to this vertex the value $5$,
and let $S_2$ be the 4-cycle with one chord and $\gamma_2$ the function assigning to all its vertices the value $6$.

\begin{lemma}\label{lemma-spars3}
Both $(S_1,\gamma_1)$ and $(S_2,\gamma_2)$ are $3$-sparsifiers of size at most 4.
\end{lemma}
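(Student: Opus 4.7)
My plan is to verify the sparsifier condition for each of $(S_1,\gamma_1)$ and $(S_2,\gamma_2)$ separately by direct construction of a $B$-opaque $L$-coloring, mirroring the two-part structure of the proof of Lemma~\ref{lemma-spars}.

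The single-vertex case $(S_1,\gamma_1)$ is immediate: with $|L(v)|=3$ and $|B(v)|\le 5$, at most $\lfloor 5/2\rfloor=2$ colors can have multiplicity at least two in $B(v)$, so we may pick $\varphi(v)\in L(v)$ avoiding them. The resulting coloring is trivially $B$-opaque since $S_1$ has a single vertex.

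For $(S_2,\gamma_2)$, I would label the vertices $a,b,c,d$ so that $a,c$ are the degree-three chord endpoints and $b,d$ are the degree-two vertices, and set $R(v)=\{\text{colors with multiplicity}\ge 2\text{ in }B(v)\}$, $A(v)=L(v)\setminus R(v)$. From $|R(v)|\le\lfloor|B(v)|/2\rfloor$ and $|B(v)|\le 6-\deg v$ we obtain $|A(a)|,|A(c)|\ge 2$ and $|A(b)|,|A(d)|\ge 1$. Two counting observations drive the rest of the argument: (i) when $|A(b)|=1$, the two doubly-labeled colors of $R(b)$ occupy all four slots of $B(b)$, so $A(b)\cap B(b)=\emptyset$ and any choice $\varphi(b)\in A(b)$ is automatically unlabeled; (ii) when $|A(a)|=2$, the doubly-labeled color of $R(a)$ uses at least two of the three slots of $B(a)$, hence $|A(a)\cap B(a)|\le 1$ and $F(a):=A(a)\setminus B(a)$ is nonempty. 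The analogues hold for $d$ and $c$ respectively.

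I would then run a case analysis based on whether $F(a)$ and $F(c)$ are empty; observation~(ii) implies $F(a)=\emptyset$ forces $|A(a)|=3$ with $B(a)=L(a)$ each of multiplicity one, and symmetrically for $c$. In the main case $F(a),F(c)\neq\emptyset$, I pick $\varphi(a)\in F(a)$ and $\varphi(c)\in F(c)$ with $\varphi(a)\ne\varphi(c)$ (by a short reshuffle when both are forced to be the same singleton), leaving $a,c$ unlabeled; then $\varphi(b)\in A(b)$ and $\varphi(d)\in A(d)$, choosing $\varphi(b)\ne\varphi(d)$ whenever $|A(b)|\ge 2$ or $|A(d)|\ge 2$. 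The degenerate subcase $|A(b)|=|A(d)|=1$ with $A(b)=A(d)$ is harmless by observation~(i), which makes both $b,d$ unlabeled anyway. In the remaining case (say $F(a)=\emptyset$), the vertex $a$ is labeled regardless of the choice of $\varphi(a)$; I pick $\varphi(c)\in F(c)$ (or handle $F(c)=\emptyset$ symmetrically), then $\varphi(a)\in A(a)\setminus\{\varphi(c)\}$, and finally $\varphi(b)\in A(b)$ and $\varphi(d)\in A(d)$ avoiding, for each $w\in\{b,d\}$, the color $\varphi(a)$ whenever $\varphi(a)\in B(w)$; observation~(i) again guarantees such a choice exists.

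In every case, $B$-opacity is verified by enumerating the connected subgraphs of $S_2=K_4-e$ (all subsets of $V(S_2)$ except $\{b,d\}$) and checking that each monochromatic one contains at most one labeled vertex; this reduces to a small finite check per configuration. The main obstacle is the sheer volume of case analysis rather than any individual subtle computation; the counting steps are all elementary consequences of $|B(v)|\le 6-\deg v$ together with the multiplicity bound on $R(v)$.
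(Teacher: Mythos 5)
The $(S_1,\gamma_1)$ part is correct and matches the paper. For $(S_2,\gamma_2)$, your strategy differs from the paper's: you classify colors per vertex (into $R(v),A(v),F(v)$) and do a case analysis on whether $F(a)$ or $F(c)$ is empty, whereas the paper classifies \emph{vertices} into a set $R$ (those $u$ with at most two distinct colors in $B(u)$), gives vertices in $R$ a color entirely outside $B(u)$, and then finds a \emph{proper} $L'$-coloring of $S_2-R$ with a small extra distinctness condition on the two degree-two vertices. The paper's route is cleaner because propriety immediately forces any two ``labeled'' same-color vertices to be the two non-adjacent degree-two vertices, and the extra condition kills the only remaining danger.

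Your construction in the ``remaining case'' (say $F(a)=\emptyset$) has a genuine gap: it only constrains $\varphi(b)$ and $\varphi(d)$ to avoid $\varphi(a)$, and imposes no relation among $\varphi(b),\varphi(c),\varphi(d)$. Consequently it permits $\varphi(b)=\varphi(c)=\varphi(d)$ with both $b$ and $d$ labeled, which violates $B$-opacity on the connected monochromatic subgraph $\{b,c,d\}$. Concretely, take $L(v)=\{1,2,3\}$ for all $v$, $B(a)=\{1,2,3\}$ (so $F(a)=\emptyset$), $B(c)=\{4,4\}$ (so $F(c)=\{1,2,3\}$), and $B(b)=B(d)=\{2,3,4,4\}$ (so $A(b)=A(d)=\{1,2,3\}$). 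Your rules allow $\varphi(c)=2$, $\varphi(a)=1$, $\varphi(b)=\varphi(d)=2$ (since $1\notin B(b),B(d)$ there is no constraint), and then $b,d$ are both labeled with color $2$ while $c$ also has color $2$, so the monochromatic connected set $\{b,c,d\}$ contains two labeled vertices. To repair this you must additionally enforce $\varphi(b)\neq\varphi(d)$ unless one of them is forced unlabeled by observation~(i), and show this is compatible with the $\varphi(a)$-avoidance constraint; as written, the rule does not do this and the ``small finite check'' would fail. The ``short reshuffle'' in the main case when $F(a)=F(c)$ is a singleton is also unspecified (though in that case simply allowing $\varphi(a)=\varphi(c)$ actually works, since both remain unlabeled, so that step is harmless but needlessly indirect).
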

\begin{proof}
Consider $i\in \{1,2\}$, let $L$ be an assignment of lists of size $3$ to vertices of $S_i$, and let $B$
be a multiassigment of a list of size $5$ in case $i=1$ and of list of size $6-\deg_{S_2} v$ to each vertex $v\in V(S_2)$ in case $i=2$.
In the case $i=1$, choose $\varphi(v)\in L(v)$ to be different from the (at most two) colors that appear in $B(v)$ more than once.
Clearly, $\varphi$ is $B$-opaque.

In the case $i=2$, we choose the $L$-coloring of $S_2$ as follows.  Let $R$ consist of the vertices $u\in V(S_2)$
such that $B(u)$ contains at most two distinct colors.  For $u\in R$, let $\varphi(u)\in L(u)$ be chosen different
from the colors in $B(u)$.  For $v\in V(S_2)\setminus R$, let $L'(v)$ consist of the colors in $L(v)$ that appear in $B(v)$ at most once.
Note that if $\deg_{S_2} v=2$, then $|B(v)|\le 4$ and since $B(v)$ contains at least three distinct colors, we have $|L'(v)|\ge 2$;
and if $\deg_{S_2} v=3$, then $|B(v)|\le 3$ and $L'(v)=L(v)$ has size three.  In particular, $|L'(v)|\ge \deg_{S_2} v$ for each
$v\in V(S_2)\setminus R$.  Hence, we can choose $\varphi$ on $V(S_2)\setminus R$ to be a proper $L'$-coloring of $S_2-R$.
Additionally, in case neither of the vertices $x,y\in V(S_2)$ of degree two belongs to $R$ and at least one vertex $z$ of degree
three belongs to $R$ (so $S_2-R$ is either a path or consists of two isolated vertices), we can choose $\varphi$ so that
$\varphi(x)\neq\varphi(y)$.

If distinct vertices $v_1,v_2\in V(S_2)$ both receive the same color $a\in B(v_1)\cap B(v_2)$, then $v_1,v_2\not\in R$
and $v_1v_2\not\in E(S_2)$, and thus $v_1$ and $v_2$ are the vertices of $S_2$ of degree two.  Moreover, by the last condition
in the choice of $\varphi$, since $\varphi(v_1)=\varphi(v_2)$, we have $R=\emptyset$, and thus no other vertex of $S_2$
has color $a$; hence, $v_1$ and $v_2$ do not belong to the same monochromatic component.  It follows that $\varphi$ is $B$-opaque.
\end{proof}

\begin{proof}[Proof of Lemma~\ref{lemma-spaspar} in the case $c=3$]
Let $\varepsilon_3=1/1000$.  We can assume $|V(G)|>4$, as otherwise the claim holds trivially.
Since $G$ has minimum degree at least three, every face of $G$ has length at least three.
Let $\beta=\sum_{f\in F} (|f|-3)$, where the sum is over all faces.  By the generalized Euler's formula,
we have $|E(G)|\le |V(G)|+|F|+g-2$, and since $2|E(G)|=\sum_{f\in F} |f|=3|F|+\beta$,
we conclude that
$$|E(G)|<3|V(G)|-\beta+3g.$$
Since $G$ is simple, does not have separating or non-contractible triangles, and $|V(G)|>4$, every vertex of degree three is incident
with a face of length at least four.  Since $G$ is $(3,4)$-sparsifier-free, $(S_1,\gamma_1)$ has no appearance in $G$,
and thus each vertex of degree at most five is also incident with a face of length at least four.
Hence, the number $n_5$ of vertices of degree at most five is at most $4\beta$.

Let us give each vertex of degree at most five the charge $1$, any vertex of degree $d\ge 6$ charge $d-6$,
and any face $f$ the charge $2|f|-6$.  By the generalized Euler's formula, the sum of charges is
at most $6g-12+4n_3<16\beta+6g$.  Each vertex $v$ of degree $d\neq 6$ now sends $1/8$ to each 
adjacent vertex; this still leaves $v$ with at least $1/8$ units
of charge.  Each face of length at least four sends $1/8$ to each incident vertex, still keeping its charge nonnegative.
After this, each vertex of degree six which received charge sends $1/56$ to each adjacent vertex.
Since each face has non-negative final charge and the total amount of charge did not change, we conclude that the sum of the final charges of vertices is
less than $16\beta+6g$.  Note that each vertex has non-negative final charge, and vertices of degree other than six have
final charge at least $1/8$.

We claim that vertices of degree six have final charge at least $1/56$.  Consider for a contradiction a vertex $v$
of degree six with smaller final charge.  All incident faces must be triangles, only incident
with vertices of degree six, and the faces incident those must also be triangles.
Moreover, since $G$ does not contain separating or non-contractible triangles, the neighbors of $v$ form an induced $6$-cycle.
This is not possible, since $(S_2,\gamma_2)$ does not have an appearance in $G$.

Since every vertex has final charge at least $1/56$, we have $|V(G)|/56\le 16\beta+6g$,
and thus $\beta\ge |V(G)|/1000-g$.
Consequently, $|E(G)|<3|V(G)|-\beta+3g<(3-1/1000)|V(G)|+4g$.
\end{proof}

We now show that Lemma~\ref{lemma-stacks} holds for $c=3$.
Note that in this case, the condition that the resulting coloring is $\psi$-opaque is trivially
satisfied, since $\psi$ cannot have distinct components of the same color on the triangle $C$.
We prove the following stronger statement.
In a coloring $\varphi$ of a 3-stack $G$ with the outer face bounded by a triangle $C$,
a vertex $v\in V(C)$ is a \emph{singleton} if no adjacent vertex in $V(G)\setminus V(C)$ has the color $\varphi(v)$.

\begin{lemma}\label{lemma-stacks3}
Let $G$ be a 3-stack with the outer face bounded by the triangle $C$, and let $u$ be a vertex of $C$.
Let $L$ be an assignment of lists of size three to vertices of $G$, and let $\psi$ be an $L$-coloring of $C$.
Then $\psi$ extends to a weak diameter-$2$ $L$-coloring of $G$ in which vertices in $V(C)\setminus\{u\}$ are singletons
and the monochromatic component containing $u$ is contained in the neighborhood of each of the vertices in $V(C)\setminus \{u\}$;
and moreover, if $\psi$ only uses at most two distinct colors on $C$, then $u$ is also a singleton.
\end{lemma}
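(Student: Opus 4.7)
The plan is to proceed by induction on $|V(G)|$. The base case $G=C$ is immediate: $\varphi=\psi$ works, since $G$ has no interior vertices, so $v$ and $w$ (and $u$, vacuously) are singletons, and the monochromatic component of $u$ is a subset of $V(C)\subseteq N[v]\cap N[w]$ of weak diameter at most $1$.

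For the inductive step, write $V(H)=V(C)\cup\{x\}$ for the $K_4$-base of $G$, and let $G_1,G_2,G_3$ be the 3-stacks filling the inner triangles $vwx$, $uwx$, $uvx$ respectively. Setting $a=\psi(u)$, $b=\psi(v)$, $c=\psi(w)$, I would choose $\varphi(x)\in L(x)$ as follows: if $L(x)\setminus\{a,b,c\}$ is nonempty, pick $\varphi(x)$ from there; otherwise $L(x)=\{a,b,c\}$, which forces $a,b,c$ distinct (since $|L(x)|=3$), and set $\varphi(x)=a$. In both scenarios $\varphi(x)\notin\{b,c\}$, and $\varphi(x)\neq a$ whenever $\psi$ uses at most two distinct colors on $C$. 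Then apply the induction hypothesis to each $G_i$ with $x$ taking the role of the distinguished vertex ``$u$'', and glue the resulting colorings with $\psi$ and $\varphi(x)$ to form $\varphi$.

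The singleton conclusions are routine: since $\varphi(x)\notin\{b,c\}$ and $v,w$ are singletons in every stack that contains them, they are singletons in $G$; and when $\psi$ uses at most two colors on $C$, the choice gives $\varphi(x)\neq a$, while $u$ plays a non-distinguished role in both $G_2$ and $G_3$ and is therefore a singleton there, so $u$ is a singleton in $G$. The delicate point is bounding the monochromatic component $Q_u$ of $u$. If $\varphi(x)\neq a$, then no interior neighbor of $u$ has color $a$, so $Q_u\subseteq V(C)\subseteq N[v]\cap N[w]$. In the \emph{merge} case $\varphi(x)=a$ (only possible when $a,b,c$ are all distinct), $Q_u=\{u,x\}\cup Q_1$ where $Q_1$ is the color-$a$ component of $x$ in $G_1$; by induction $Q_1\subseteq N_{G_1}[v]\cap N_{G_1}[w]$, and hence $Q_u\subseteq N_G[v]\cap N_G[w]$. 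Crucially, $Q_u$ cannot leak into $G_2$ or $G_3$: their boundaries carry only two colors each ($\{a,c\}$ and $\{a,b\}$ respectively), so the inductive ``at most two colors'' clause forces both $x$ and $u$ to be singletons in these stacks.

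The weak-diameter-$2$ property is then a routine verification: each monochromatic component either lies entirely in some $G_i$ (and has weak diameter at most $2$ by induction) or meets $V(C)\cup\{x\}$, in which case all its vertices share a common neighbor among $\{u,v,w\}$; in particular, the color-$\varphi(x)$ component is glued across the three $G_i$'s, and the inductive containment statements show the three pieces share $w$, $v$, and $u$ respectively as pairwise hubs, giving pairwise $G$-distance at most $2$. The main obstacle is precisely the merge case: containing $Q_u$ in $N[v]\cap N[w]$ requires simultaneous use of both the containment conclusion and the two-color singleton clause of the inductive hypothesis, which is exactly why the three strengthened conclusions need to be carried together through the induction.
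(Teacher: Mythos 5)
Your proof is correct and follows essentially the same strategy as the paper's: induct on $|V(G)|$, choose a color for the central vertex of the $K_4$-base that avoids the colors of the two non-distinguished vertices of $C$ (and avoids $\psi(u)$ when $\psi$ uses at most two colors), apply the inductive hypothesis to the three sub-stacks with the center as the distinguished vertex, and then verify the singleton, containment, and weak-diameter conclusions by analyzing the merge case $\varphi(x)=\psi(u)$ separately from the non-merge case. The only (immaterial) deviation is that you always avoid $\psi(u)$ when possible, setting $\varphi(x)=\psi(u)$ only when forced by $L(x)=\{\psi(u),\psi(v),\psi(w)\}$, whereas the paper only insists on avoiding $\psi(u)$ when $\psi$ uses at most two colors; both choices make the merge case occur only when $\psi$ uses three distinct colors, which is all the argument needs.
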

\begin{proof}
We prove the claim by induction on the number of vertices of $G$.
The claim is clear if $G=C$, and thus we can assume that there exists a vertex $v\in V(G)$ adjacent to all vertices
of $C$.  Let $C_1$, $C_2$, and $C_3$ be the three triangles in $G[V(C)\cup\{v\}]$ distinct from $C$,
where $u\not\in V(C_3)$.  Choose a color $\psi(v)\in L(v)$ distinct from the colors of the two vertices in $V(C)\setminus \{u\}$,
and distinct from $\psi(u)$ if $\psi$ only uses at most two distinct colors on $C$.

For $i\in\{1,2,3\}$, extend $\psi$ to a weak diameter-$2$ $L$-coloring of $G_{C_i}$ by the induction hypothesis, with the vertex $v$ playing
the role of $u$. This ensures that the vertices in $V(C)\setminus\{u\}$ are singletons in the resulting $L$-coloring of $G$, and
if $\psi(v)\neq \psi(u)$ (which is always the case if $\psi$ only uses at most two colors on $C$), then also $u$ is a singleton.
Consider the monochromatic component $Q$ of the vertex $v$ in the resulting coloring:
\begin{itemize}
\item If $\psi(v)\neq \psi(u)$, then by the induction hypothesis, for $1\le i<j\le 3$, $Q\cap (V(G_{C_i})\cup V(G_{C_j}))$
is contained in the neighborhood of a vertex of $C$, and thus $Q$ has weak diameter at most two.
\item If $\psi(v)=\psi(u)$, then $u$ and $v$ are singletons in the colorings of $G_{C_1}$ and $G_{C_2}$,
and thus $Q=(Q\cap V(G_{C_3}))\cup \{u\}$.  By the induction hypothesis, we conclude that $Q$ is contained in the
neighborhood of each of the vertices in $V(C)\setminus \{u\}$.
\end{itemize}
\end{proof}

\bibliographystyle{siam}
\bibliography{data.bib}

\end{document}